\tikzset{
  symbol/.style={
    draw=none,
    every to/.append style={
      edge node={node [sloped, allow upside down,
      auto=false]{$#1$}}}
  }
}
\newtheorem{prop}{Proposition}[section]
\newtheorem{thm}[prop]{Theorem}
\newtheorem{defn-thm}[prop]{Theorem-Definition}
\newtheorem{defn-lem}[prop]{Lemma-Definition}
\newtheorem{cor}[prop]{Corollary}
\newtheorem{lem}[prop]{Lemma}
\newtheorem*{thm*}{Theorem}
\newtheorem*{cor*}{Corollary}
\theoremstyle{definition}
\newtheorem{rem}[prop]{Remark}
\newtheorem*{claim*}{Claim}
\newtheorem*{rem*}{Remark}
\newcommand{\su}{\subset}
\newcommand{\cx}{{\breve{x}}}
\newcommand{\cy}{{\breve{y}}}
\newcommand{\ttto}{\dashrightarrow} 
\newcommand{\tto}{\longrightarrow}
\newcommand{\mapstto}{\longmapsto}
\newcommand{\wt}{\widetilde}
\newcommand{\ov}{\overline}
\newcommand{\PP}{\mathbb{P}}
\newcommand{\AAA}{\mathbb{A}}
\newcommand{\OO}{\mathcal{O}}
\newcommand{\pp}{\mathfrak{p}}
\newcommand{\fq}{\mathfrak{q}}
\newcommand{\al}{\alpha}
\newcommand{\be}{\beta}
\newcommand{\ga}{\gamma}
\newcommand{\de}{\delta}
\newcommand{\sig}{\sigma}
\newcommand{\ka}{\kappa}
\newcommand{\eps}{\varepsilon}
\DeclareMathOperator{\Spec}{Spec}
\DeclareMathOperator{\Proj}{Proj}
\DeclareMathOperator{\divv}{div}
\begin{document}

\title[A canonical moving singularity]{Fibrations by plane quartic curves with \\
a canonical moving singularity}

\author{Cesar Hilario}
\address{Mathematisches Institut, Heinrich-Heine-Universit\"at, 40204 D\"usseldorf, Germany}
\email{cesar.hilario.pm@gmail.com}

\author{Karl-Otto Stöhr}
\address{IMPA, Estrada Dona Castorina 110, 22460-320 Rio de Janeiro, Brazil}
\email{stohr@impa.br}

\subjclass[2010]{14G17, 14H05, 14H45, 14D06, 14E05}

\dedicatory{February 1, 2025}

\begin{abstract}
We classify fibrations by integral plane projective rational quartic curves whose generic fibre is regular but admits 
a non-smooth point that is a canonical divisor.
These fibrations can only exist in characteristic two.
The geometric generic fibre, which determines the generic behaviour of the special fibres, is an integral plane projective rational quartic curve over the algebraic closure of the function field of the base.
It has the remarkable property that the tangent lines at the non-singular points are either all bitangents or all non-ordinary inflection tangents;
moreover it is strange, that is, all the tangent lines meet in a common point.
We construct two fibrations that are universal in the sense that any other fibration with the aforementioned properties can be obtained from one of them by a base extension.
Furthermore, among these fibrations we choose a pencil of plane quartic curves and study in detail its geometry. We determine the corresponding minimal regular model and we describe it as a purely inseparable double covering of a quasi-elliptic fibration.
\end{abstract}

\maketitle

\setcounter{tocdepth}{1}

\section{Introduction}

By Bertini's theorem on variable singular points, in characteristic zero the general fibre of a fibration $f:T\to B$ of smooth algebraic varieties over an algebraically closed field $k$ is smooth.
This no longer holds in positive characteristic.
Classical examples include the quasi-elliptic fibrations that appeared in the classification of algebraic surfaces in characteristics 2 and 3 by Bombieri and Mumford \cite{BM76}.

In this paper we classify a certain class of fibrations with non-smooth general fibre, namely fibrations by plane projective rational quartic curves with a canonical moving singularity.
The generic fibre $C=f^{-1}(\eta)$ of such a fibration $f:T\to B$ is a regular non-hyperelliptic geometrically integral curve over the function field $K=k(B)$ of the base, which admits a non-smooth point $\pp$ that is a canonical divisor on $C$.
Moreover it is geometrically rational, i.e., the geometric generic fibre $C \otimes_K \ov K$ is a rational curve over the algebraic closure $\ov K$, and the arithmetic genus is equal to $h^1(C,\OO_C) = 3$.

It turns out that a curve $C$ satisfying the above properties can only exist in characteristic $p=2$. 
Our main result gives a precise description 

\begin{thm}[see Theorem~\ref{2023_04_10_16:50}] 
\label{2023_05_23_19:35}
Let $C$ be a regular proper non-hyperelliptic geometrically integral and geometrically rational curve of arithmetic genus $h^1(C,\OO_C)=3$ over a field $K$ of characteristic $p$.
Assume in addition that $C$ admits a non-smooth point $\pp$ that is a canonical divisor on $C$.
Then $p=2$ and $C$ is isomorphic to one of the following plane projective quartics. 
\begin{enumerate}[\upshape (i)]
   \item \label{2023_05_23_16:02}
   $y^4 + a z^4 + x z^3 + b x^2 z^2 + c x^4 = 0$, \\
   where $a,b,c \in K$ are constants satisfying $c \notin K^2$.
   \item \label{2023_05_23_16:01}
   $ y^4 + a z^4 + b x^2 y^2 + c x^2 z^2 + b x^3 z + d x^4 = 0$, \\
   where $a,b,c,d \in K$ are constants satisfying $a\notin K^2$ and $b\neq 0$.
   In this case the polynomial expression $\iota = a b^2 + c^2 + 1$ is an invariant of the curve $C$.
\end{enumerate}
Conversely, each of these equations defines a curve of the above type.
\end{thm}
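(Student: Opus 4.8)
The plan is to recover $C$ from its canonical embedding and then read off the two normal forms from the local geometry of the non-smooth canonical point. Since $C$ is geometrically integral we have $H^0(C,\OO_C)=K$, so by duality $h^0(C,\om_C)=h^1(C,\OO_C)=3$ and $\deg\om_C=2\cdot 3-2=4$. The first step is to show that the complete canonical system $|\om_C|$, of dimension $2$, embeds $C$ as a plane quartic in $\PP^2_K$. The subtle point, absent in the classical smooth case, is that $C$ is only regular and not geometrically smooth; I would therefore verify very-ampleness of $\om_C$ directly, using that the non-hyperelliptic hypothesis is precisely what prevents the canonical morphism from factoring as a double cover of a conic, so that it is a closed immersion onto a quartic.

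Next I would analyse the distinguished point $\pp$. Since $[\pp]$ is a canonical divisor and $\deg\om_C=4$, the point has degree $[\kappa(\pp):K]=4$; being non-smooth and regular in characteristic $p=2$, its residue field is purely inseparable over $K$, so $\pp$ is supported at a single geometric point $\bar\pp$ of $\ov C:=C\otimes_K\ov K$. Because the canonical divisors on the embedded quartic are exactly its line sections, there is a line $L$ defined over $K$ with $L\cap C=[\pp]$; over $\ov K$ this says $L$ meets $\ov C$ at the single point $\bar\pp$ with intersection multiplicity $4$. As $C$ is geometrically rational of arithmetic genus $3$, the quartic $\ov C$ is a rational curve whose singularities carry total $\delta$-invariant $3$, and the moving singularity sits precisely at $\bar\pp$. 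Combining the triple constraint of this intersection behaviour, the rationality of $\ov C$, and the geometric type of the singularity at $\bar\pp$ pins down the local structure.

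I would then choose homogeneous coordinates $x,y,z$ adapted to this configuration, placing $\bar\pp$ at a coordinate vertex and $L$ at a coordinate line, and write the general quartic form. Imposing that $L$ meets $C$ at $\pp$ with multiplicity $4$, that $C$ be regular at $\pp$, and that $\kappa(\pp)$ have the forced purely inseparable structure should cut the coefficients down to a short list; the residual coordinate changes fixing $L$ and $\bar\pp$ in characteristic $2$ then reduce the equation to normal form. The dichotomy between families \eqref{2023_05_23_16:02} and \eqref{2023_05_23_16:01} would emerge from the two possible local behaviours at $\bar\pp$, namely whether the tangent lines at the smooth points are all bitangents or all non-ordinary inflection tangents. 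The inseparability conditions $\aEE\notin K^2$ in case \eqref{2023_05_23_16:02} and $\aT\notin K^2$, $\m\neq 0$ in case \eqref{2023_05_23_16:01} are exactly what make $\pp$ non-smooth while keeping $C$ regular, and in the second case the expression $\aT\m^2+\n^2$ survives the residual coordinate changes and is thus an invariant. For the converse I would take each displayed equation and check directly that it defines a regular, geometrically integral, non-hyperelliptic plane quartic of arithmetic genus $3$ that is geometrically rational with a non-smooth canonical point, the stated constraints on the constants being precisely the regularity-but-non-smoothness conditions.

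The main obstacle I expect is the interplay between the canonical geometry and the non-perfect base field: establishing very-ampleness of $\om_C$ for a merely regular curve, and correctly translating ``non-smooth regular canonical point'' into the precise inseparability and intersection-multiplicity data, before the characteristic-$2$ coordinate bookkeeping can both separate the two cases and verify that the list is exhaustive.
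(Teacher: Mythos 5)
Your endgame matches the paper's (Section~\ref{2023_05_24_02:00}: the canonical system, realized concretely by $H^0(\pp)$, embeds $C$ as a plane quartic, and the line cutting out $\pp$ becomes a coordinate line), but the heart of the classification is missing from your plan. The sentence ``imposing that $L$ meets $C$ at $\pp$ with multiplicity $4$, that $C$ be regular at $\pp$, and that $\kappa(\pp)$ have the forced purely inseparable structure should cut the coefficients down to a short list'' is where the entire theorem lives, and you give no mechanism for carrying it out. The hypotheses ``geometrically rational'' and ``$\de(\pp)=3$'' are not conditions one can impose coefficient-by-coefficient on a quartic form: they assert that the geometric genus drops from $3$ to $0$, and converting that into polynomial conditions on the coefficients requires a way to compute geometric singularity degrees of a regular, non-smooth point. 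The paper's engine for this is the tower of Frobenius pullbacks $F\supset F_1\supset F_2\supset F_3$: non-hyperellipticity forces $g_1=1$, so $F_1|K$ is quasi-elliptic; the Queen-type normal forms together with their Riemann--Roch spaces (Theorem~\ref{2023_05_26_02:00}, Remark~\ref{2023_12_21_22:35}) and the Bedoya--St\"ohr formula for $\de$ then allow one to build a normal form of $F|K$ on top of a normal form of $F_1|K$ (Theorem~\ref{2023_05_18_23:10}), after which the quartic equations of Theorem~\ref{2023_04_10_16:50} fall out by elimination. Your proposal has no substitute for this machinery, neither in the forward direction nor in the converse, where ``check directly that it is geometrically rational'' again requires computing $\de$-invariants over $\ov K$.

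Moreover, your case division is circular: whether the tangent lines at the smooth points are all bitangents or all non-ordinary inflection tangents is a global property of the curve, not a ``local behaviour at $\bar\pp$'', and in the paper it is a \emph{consequence} of the finished classification (end of Section~\ref{2023_05_24_02:00}), not an a priori dichotomy; you would first have to prove that exactly these two behaviours can occur, which presupposes the theorem. The dichotomy that actually drives the proof is arithmetic: whether the image $\pp_2$ of $\pp$ on the second normalized Frobenius pullback is rational or not, which selects between the normal forms~\ref{2023_05_26_02:01} and~\ref{2023_05_26_02:03} of Theorem~\ref{2023_05_26_02:00} for the quasi-elliptic pullback $F_1|K$, and hence between the two families of quartics. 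Two smaller gaps: since $\kappa(\pp)$ has degree $4$ over $K$, the point $\bar\pp$ is not $K$-rational, so you cannot place it at a coordinate vertex by a $K$-linear change of coordinates (only the line $L$ can be so normalized over $K$); and pure inseparability of $\kappa(\pp)|K$ is not automatic from regularity plus non-smoothness --- inseparability is (see \cite[Proposition~2.7]{HiSt22}), but excluding a nontrivial separable subextension requires the non-decomposedness results from \cite{HiSt22} that the paper invokes, or the explicit residue-field computations in its normal forms.
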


In addition, we determine the isomorphism classes of these curves (see Propositions~\ref{2023_06_05_16:35} and~\ref{2024_08_11_01:30}), 
thus completing their classification.
The point $\pp$ is the only non-smooth point on the regular curve $C$.
Its images $\pp_n \in C_n$ in the sequence of normalized Frobenius pullbacks
\[ C_0 = C \to C_1 \to C_2 \to C_3 \to \cdots, \]
where $C_n$ denotes the normalization of the $n$th iterated Frobenius pullback $C^{(p^n)}$,
have geometric $\de$-invariants 
$\de(\pp)=3$, $\de(\pp_1)=1$, $\de(\pp_2) = \de(\pp_3) = \dots = 0$.
For $n\geq 3$ the smooth point $\pp_n$ is actually rational (see \cite[Theorem~2.24]{HiSt22}), whereas for $n=2$ both possibilities can occur: 
$\pp_2$ is rational for item~\ref{2023_05_23_16:02}, and non-rational for item~\ref{2023_05_23_16:01}.

If the curve $C|K$ is the generic fibre of a fibration $f:T\to B$, then the properties of almost all special fibres are governed by the properties of the geometric generic fibre $C\otimes_K \ov K$, 
which is an integral plane projective rational quartic curve over the algebraic closure $\ov K = \ov {k(B)}$.
Its only singularity is a unibranch point, whose only tangent line does not intersect the curve at any other point.
The quartic curve $C\otimes_K \ov K$ is \emph{strange}, i.e., all its tangent lines meet in a common point,
and it has the remarkable property that the tangent lines at its non-singular points are 
either all bitangents (in case~\ref{2023_05_23_16:01}) or all non-ordinary inflection tangents (in case~\ref{2023_05_23_16:02}).

We construct two fibrations by plane projective rational quartics with a canonical moving singularity, that are universal in the sense that any other fibration whose generic fibre satisfies the hypotheses in Theorem~\ref{2023_05_23_19:35} can be obtained from one of them by a base extension (see Theorems~\ref{2023_05_22_23:55} and~\ref{2023_05_23_16:35}).
We prove that their total spaces are uniruled, and in fact that the total space of every closed subfibration obtained from them is uniruled.

We show that the only singular point on $C\otimes_K \ov K$ has multiplicity two or three.
The case of multiplicity three occurs exactly in item~\ref{2023_05_23_16:02}, when $b=0$;
the corresponding pencil of quartics is studied in detail in Section~\ref{2024_07_01_19:25},
where we describe explicitly its minimal regular model
(see Theorem~\ref{2022_02_11_12:20}).
The determination of the minimal model is inspired by work of Kodaira and Néron \cite{Kod63,Ner64} 
on the classification of special fibres of minimal fibrations by elliptic curves.
Furthermore, we describe the pencil as a purely inseparable double cover of a \emph{quasi-elliptic fibration}.
The existence of such a cover is due to a remarkable fact: the normalized Frobenius pullbacks $C_1$ of the curves $C$ in Theorem~\ref{2023_05_23_19:35} are \emph{quasi-elliptic}.

Quasi-elliptic fibrations 
were introduced by Bombieri and Mumford in their extension of the Enriques-Kodaira classification of complex algebraic surfaces to arbitrary characteristics \cite{BM76,BM77}. 
The generic fibre of a quasi-elliptic fibration is a \emph{quasi-elliptic curve}, which is a regular geometrically integral curve of arithmetic genus $1$ that is non-smooth.
Since $C_1$ is quasi-elliptic, the curve $C$ is a purely inseparable cover of degree $p=2$ of a quasi-elliptic curve, via the Frobenius map $C \to C_1$.
This is a unique feature of geometry in characteristic $p=2$,
for in characteristic $p>2$ the normalized Frobenius pullback $X_1$ of any regular geometrically integral curve $X$ of arithmetic genus $h^1(X,\OO_X)=3$ is smooth (see \cite[Corollary~2.7]{HiSt22}).

The investigation of varieties over imperfect fields has attracted substantial attention in recent years due to their connection with the Minimal Model Program in positive characteristic, where Del Pezzo surfaces play a prominent role (see \cite{FaSc20, Tan21, BerTan22}). 
The case of interest to us, namely the case of regular but non-smooth curves, was analysed quite early by Tate \cite{Tate52}, who established restrictions on the drop in genus, by Queen \cite{Queen71}, who studied quasi-elliptic function fields, and quite recently by Tanaka \cite[Sections~9 and~10]{Tan21}, Fanelli and Schr\"oer \cite[Section~12]{FaSc20}, and Hilario and Schr\"oer \cite{HiSc23}.

The present article is the first in a sequence of two papers on the classification of fibrations by plane projective rational quartic curves in characteristic two (see~\cite{HiSt24}).
Our approach relies on the correspondence between function fields and regular curves \cite[II.7.4]{EGA}, a procedure we have also exploited in our previous article \cite{HiSt22}.
The starting point is the main theorem in \cite{HiSt22}, which guarantees that the function field of the regular curve $C_3|K$ is rational.
Then the Riemann--Roch theorem coupled with the Bedoya--St\"ohr algorithm \cite{BedSt87} provide us with suitably chosen algebraic functions that we can adjoin to the function field of $C_3|K$, to successively construct explicit presentations for the function fields of $C_2|K$, $C_1|K$ and $C|K$.
This allows us to realize $C|K$ as a plane projective quartic curve via a canonical embedding, given by the canonical linear system $|\pp|$.
In addition, since the regular curve $C_1|K$ is quasi-elliptic, on our way we are able to recover Queen's characterization of quasi-elliptic function fields \cite{Queen71} (see Section~\ref{2023_06_20_17:00}).

The paper is organized as follows.
In Section~\ref{2023_06_20_17:00} we obtain a description of quasi-elliptic function fields that is suited to our needs in Section~\ref{2023_04_10_16:30}, where we carry out a detailed study of the function fields $F|K$ of genus $g=3$ which are geometrically rational and have a unique singular prime $\pp$ that is a canonical divisor.
These results are employed in Section~\ref{2023_05_24_02:00} to construct their canonical models in the non-hyperelliptic case, 
which in turn yield the desired embeddings of our curves as plane projective quartics. 
In the last two sections we discuss the two aforementioned universal fibrations and the pencil of quartics that can be viewed as a cover of a quasi-elliptic pencil.

\subsection*{Acknowledgements}
We would like to thank Stefan Schröer for comments and feedback on an earlier version of this manuscript.
We also thank the referee for his/her very careful and thougthful reading, in particular for spotting several mistakes in Section~\ref{2023_06_20_17:00}.
Parts of this paper are based on the first author's PhD thesis, written with the financial support of CAPES Brazil.
The paper is also based on research performed in the framework of the research training group \textit{GRK 2240: Algebro-Geometric Methods in Algebra, Arithmetic and Topology}.

\section{Quasi-elliptic function fields}
\label{2023_06_20_17:00}
Given a regular proper geometrically integral curve $C$ over a field $K$, the properties of $C|K$ are encoded in the function field $F|K = K(C)|K$, which is a
\emph{one-dimensional separable function field}, i.e., $F|K$ is a separable finitely generated field extension of transcendence degree $1$, such that $K$ is algebraically closed in $F$.
Conversely, every one-dimensional separable function field $F|K$ is attached to a curve $C|K$ of the above type.

A one-dimensional separable function field $F|K$ is called \emph{quasi-elliptic} if it has genus $g=1$ and if it is geometrically rational, that is, the extended function field $\ov K F | \ov K = \ov K \otimes_K F |\ov K$ has genus $\ov g = 0$.
These function fields can only occur in characteristic $p \in \{2,3\}$, and they indeed play an important role in the arithmetic of Enriques surfaces in characteristic two (see for example \cite{BM76,KKM20}).

Quasi-elliptic function fields were investigated in detail by Queen~\cite{Queen71,Queen72}.
Our motivation for revising Queen's results in this section is to provide a description that is suitable for our purposes in Section~\ref{2023_04_10_16:30}, where we will encounter function fields $F|K$ whose first Frobenius pullbacks $F_1|K = F^p {\cdot} K|K$ are quasi-elliptic.
Our approach, based on the main theorem in \cite{HiSt22}, 
provides certain Riemann-Roch spaces that are not given in \cite{Queen71}, 
but which will be crucial to derive a normal form of $F|K$ out of a normal form of $F_1|K$ in Section~\ref{2023_04_10_16:30}.
The method also illustrates a general procedure to construct a normal form of a function field $F|K$ as soon as a normal form of some Frobenius pullback $F_n|K = F^{p^n} {\cdot} K|K$ is known (see \cite[p.\,281]{HiSt22}).
Recall that each Frobenius pullback $F_n|K$ is characterized by the property that $F_n$ is the only intermediate field of $F|K$ such that the extension $F|F_n$ is purely inseparable of degree $p^n$.

\medskip

Let $F|K$ be a quasi-elliptic function field. 
By Rosenlicht's genus drop formula (see \cite[Corollary of Theorem~11]{Ros52} or \cite[Formula~2.3]{HiSt22}) the number of the singular primes of $F|K$ counted with their geometric singularity degrees is equal to $g-\ov g = 1$, 
hence there is a unique singular prime $\pp$, whose geometric singularity degree $\de(\pp)$ is equal to $1$.
As singular primes can only occur in characteristic $p>0$,
in which case their singularity degrees are multiples of $(p-1)/2$ \cite[Proposition~2.5]{HiSt22}, we conclude that $p=2$ or $p=3$.

To study the extension of $\pp$ to the purely inseparable base field extension $F K^{1/p} |K^{1/p}$, one can apply the absolute Frobenius map $z\mapsto z^p$ and consider instead the restriction $\pp_1$ of $\pp$ to the first Frobenius pullback $F_1|K=F^p K |K$ of $F|K$.
The restricted prime $\pp_1$ is non-singular by \cite[Corollary~2.6]{HiSt22} and so by the genus drop formula the function field $F_1|K$ has genus $g_1=\ov g = 0$.
According to \cite[Corollary~2.19]{HiSt22} the prime $\pp$ is non-decomposed (in the base field extension $\ov K F|\ov K$), i.e., there is a unique prime of $\ov K F|\ov K$ lying over $\pp$,
i.e., $\pp$ is purely inseparable \cite[Corollary~2.17]{HiSt22}.
By \cite[Theorem~2.24]{HiSt22} the restriction $\pp_2$ of $\pp$ to the second Frobenius pullback $F_2|K$ is rational.
In characteristic $p=3$ the non-singular prime $\pp_1$ is also rational, but in characteristic $p=2$ it may not be, as illustrated in the following theorem.

\begin{thm}[{\cite[Theorem~2]{Queen71}}]\label{2023_05_26_02:00}
A one-dimensional separable function field $F|K$ of characteristic $p=2$ is quasi-elliptic, i.e., 
it is geometrically rational and admits a unique singular prime $\pp$, whose geometric singularity degree $\de(\pp)$ is equal to 1, 
if and only if $F|K$ can be put into one of the normal forms below.
\begin{enumerate}[\upshape (i)]
    \item \label{2023_05_26_02:01}
    $F|K=K(x,z)|K$, where 
    $$ z^2 = a_0 + x + a_2 x^2 + a_4 x^4$$
    and $a_0,a_2\in K$, $a_4 \in K\setminus K^2$.
    Here the singular prime $\pp$ has degrees $\deg(\pp)=2$, $\deg(\pp_1)=1$, and residue fields $\ka(\pp)=K(a_4^{1/2})$, $\ka(\pp_1)=K$.
    
    \item \label{2023_05_26_02:02}
    $F|K=K(x,z)|K$, where 
    $$z^4 = a_0 + x + a_2 x^2 + b_2^2 x^4,$$
    and $a_0\in K$, \mbox{$a_2 \in K \setminus K^2$}, \mbox{$b_2 \in K \setminus K^2(a_2)$}.
    Here $\deg(\pp)=4$, $\deg(\pp_1)=2$, $\deg(\pp_2)=1$, and $\ka(\pp)=K(a_2^{1/2},b_2^{1/2})$, $\ka(\pp_1) = K(a_2^{1/2})$, $\ka(\pp_2)=K$.
    
    \item \label{2023_05_26_02:03}
    $F|K=K(x,z)|K$, where 
    $$z^4 = a_0 + x + a_2 x^2, $$ 
    and $a_0\in K$, $a_2 \in K \setminus K^2$.
    Here $\deg(\pp)=\deg(\pp_1)=2$, $\deg(\pp_2)=1$, and $\ka(\pp) = \ka(\pp_1) = K(a_2^{1/2})$, $\ka(\pp_2)=K$.
\end{enumerate}
In each case the singular prime $\pp$ is the only pole of the function $x$.
\end{thm}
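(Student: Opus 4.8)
The converse is the routine direction: for each of the three equations one checks by a local computation at the pole of $x$ that $F|K$ has genus $1$, that $F_1|K$ has genus $0$, and that this pole $\pp$ is the unique singular prime with $\de(\pp)=1$, reading off $\deg(\pp)$, $\deg(\pp_1)$, $\deg(\pp_2)$ and the residue fields from the leading behaviour of the defining equation. The hypotheses $a_4\notin K^2$, $a_2\notin K^2$ and $\bC\notin K^2(a_2)$ are exactly what makes the indicated residue extensions nontrivial. I therefore concentrate on the forward implication, where I may already use from the discussion preceding the statement that $F_1|K$ has genus $0$, that $F_2|K$ is rational, that $\pp$ is non-decomposed with $\de(\pp)=1$, and that $\pp_2$ is rational.

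The first step is to fix the trichotomy. Since $F_1|F_2$ and $F|F_1$ are purely inseparable of degree $p=2$, there is a unique prime over $\pp_2$, resp.\ $\pp_1$, and the residue degrees satisfy $[\ka(\pp_1):\ka(\pp_2)]\in\{1,2\}$ and $[\ka(\pp):\ka(\pp_1)]\in\{1,2\}$; because $\ka(\pp_2)=K$, this already forces $\deg(\pp_1)\in\{1,2\}$ and $\deg(\pp)\le 4$, and together with $\de(\pp)=1$ it isolates exactly the three listed possibilities. Next I pin down a normal form of the genus-$0$ field $F_1|K$ by choosing a function $x$ whose only pole is $\pp_1$. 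Riemann--Roch gives $\dim_K L(\pp_1)=\deg(\pp_1)+1$. When $\deg(\pp_1)=1$ one has $L(\pp_1)=\langle 1,x\rangle$, so $x$ has a single simple pole and $F_1=K(x)$, which is case~\ref{2023_05_26_02:01}. When $\deg(\pp_1)=2$ one has $L(\pp_1)=\langle 1,x,v\rangle$, and the conic relation among $1,x,v$ realizes $F_1$ as a purely inseparable extension $K(x)(v)$ with $v^2$ a polynomial in $x$, the residue field $\ka(\pp_1)=K(a_2^{1/2})$ appearing as the field of definition of the degree-$2$ pole; these are cases~\ref{2023_05_26_02:02} and~\ref{2023_05_26_02:03}. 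In every case the element $x$ so produced already lies in $F_1$ and has $\pp$ as its only pole in $F$, which gives the last assertion of the theorem.

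It remains to recover $F$ from $F_1$. As $F|F_1$ is purely inseparable of degree $2$, write $F=F_1(z)$ with $z^2\in F_1\setminus F_1^2K$. Reducing $z^2$ modulo squares of $F_1$ (which only alters $z$ by an element of $F_1$ and hence does not change $F$) and clearing poles against $x$, I bring the defining datum into a polynomial in $x$. A local analysis at $\pp$ then shows that $\de(\pp)=1$ forces this polynomial to have degree $4$ with vanishing cubic coefficient and a unit linear coefficient: a nonzero cubic term makes $\pp$ geometrically smooth (so $\de(\pp)=0$), while the linear term of order one is what produces a branch with $u\sim s^2$ in a local parameter $s$ and hence $\de(\pp)=1$. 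Scaling $x$ and translating by constants normalises the linear coefficient to $1$, yielding $a_0+x+a_2x^2+a_4x^4$ in case~\ref{2023_05_26_02:01} and, with $z$ replaced by $z^2=v$ in the degree-$2$ cases, the forms $z^4=a_0+x+a_2x^2+\bC^2x^4$ and $z^4=a_0+x+a_2x^2$.

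The step I expect to be the main obstacle is the precise control of the residue-field tower $\ka(\pp_2)\subseteq\ka(\pp_1)\subseteq\ka(\pp)$ over the imperfect field $K$, which is what separates the three cases and must be tracked carefully through each reduction. The delicate point is distinguishing case~\ref{2023_05_26_02:02} from case~\ref{2023_05_26_02:03}: both have $\deg(\pp_1)=2$ and $\ka(\pp_1)=K(a_2^{1/2})$, and the dichotomy is whether passing from $\pp_1$ to $\pp$ adjoins a further independent square root to the residue field, giving $\deg(\pp)=4$ and the condition $\bC\notin K^2(a_2)$, or not, giving $\deg(\pp)=2$. Deciding this amounts to analysing the leading coefficient of $z^2$ at $\pp$ and testing whether its square root is independent of $a_2^{1/2}$ modulo $K$, which is exactly the content of the genericity hypotheses in the two normal forms and where the Riemann--Roch spaces supplied by \cite{HiSt22} do the essential work.
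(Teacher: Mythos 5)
Your overall skeleton does match the paper's: split according to whether $\pp_1$ is rational, put the genus-zero field $F_1|K$ into normal form, then recover $F=F_1(z)$ by pinning down $z^2\in F_1$. The breakdown occurs at the decisive step, namely translating $\de(\pp)=1$ into conditions on coefficients, in the cases where $\pp_1$ is non-rational. First, your reduction of ``the defining datum'' to a polynomial in $x$ is impossible at the level of $z^2$: writing $F_1=K(x)\oplus K(x)w$ with $w^2=a_0+x+a_2x^2$, one has $z^2=A(x)+B(x)w$, and replacing $z$ by $z+f$ with $f\in F_1$ changes only $A$ (because $f^2\in K(x)$), while $B\neq 0$ is forced by separability of $F|K$ (if $B=0$ then $z^2\in F_2=K(x)$ and $[F:F^2K]=4$). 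The $w$-term cannot be removed, and indeed it survives in the paper's normal form $z^2=b_0+b_1x+b_2x^2+w$. So the datum you can hope to reduce is the quartic relation $z^4\in K(x)$ --- and for that quartic your criterion is false.

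Concretely, $F=K(x,z)$ with $z^4=a_0+x+a_2x^2+a_4x^4$ and $a_4\notin K^2$ has vanishing cubic coefficient, unit linear coefficient, and a nontrivial residue extension $\ka(\pp)=K(a_4^{1/4})$, yet it has genus $3$ and $\de(\pp)=3$: this is exactly the normal form of Theorem~\ref{2023_05_18_23:10}~\ref{2024_04_12_11:10} (here $F_1$ is quasi-elliptic, not of genus $0$). What separates the quasi-elliptic quartics \ref{2023_05_26_02:02} and \ref{2023_05_26_02:03} from these genus-$3$ ones is that the coefficient of $x^4$ is a \emph{square} in $K$ (it equals $b_2^2$), equivalently that the $xw$-component $b_4$ of $z^2$ vanishes; your branch-type argument can never detect this, because over an imperfect field $\de(\pp)$ is not determined by the geometric shape of the branch cut out by the equation: it depends on which functions are defined over $K$. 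For instance $(z^2+b_2x^2)/x$ lies in $\OO_\pp$ precisely when the quartic coefficient is the square $b_2^2$, and its presence is what brings $\de(\pp)$ down from $3$ to $1$, even though over $\ov K$ both equations become $Z^4=a_0+x+a_2x^2$ after absorbing the leading coefficient. Proving that $\de(\pp)=1$ holds if and only if $b_4=0$ and $b_3\neq 0$ is the technical heart of the paper's proof: it uses the Riemann--Roch space $H^0(\pp_1^2)=K\oplus Kx\oplus Kx^2\oplus Kw\oplus Kxw$ to bound poles (which your ``clearing poles against $x$'' also silently requires), and then a local computation at $\pp$ via \cite[Theorem~2.3]{BedSt87}, split into the two cases where $\pp$ is inertial or ramified over $F_1$. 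Your proposal contains no substitute for this argument; by contrast, the \ref{2023_05_26_02:02}-versus-\ref{2023_05_26_02:03} dichotomy that you single out as the main obstacle is comparatively routine bookkeeping once $z^2=b_0+b_1x+b_2x^2+w$ is established. (The same issue appears, more mildly, in case \ref{2023_05_26_02:01}: the cusp with $t\sim s^2$ yields $\de(\pp)=1$ only because $a_4\notin K^2$; if $a_4\in K^2$ the function $(z+a_4^{1/2}x^2)/x$ lies in $\OO_\pp$, the prime is non-singular and the field has genus $0$. There this can be rescued from non-rationality of the singular prime, but the inference has to be made, and your trichotomy likewise needs ``singular $\Rightarrow$ non-rational'' to exclude $\deg(\pp)=1$.)
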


Before getting to the proof, 
let us note that although the standing assumption in \cite{BedSt87} is that the base field $K$ of $F|K$ is separably closed,
all we need in order to run the Bedoya-Stöhr algorithm is that the restriction $\pp_n$ of the prime $\pp$ in question is rational for some $n$.
This means that the prime $\pp$ is non-decomposed, a condition that is automatic if $K$ is separably closed (see \cite[Corollary~2.17]{HiSt22}).

\begin{proof}
Let $F|K$ be a quasi-elliptic function field and let $\pp$ be its only singular prime.
By the preceding discussion, the first Frobenius pullback $F_1|K$ has genus $g_1=0$ and the restricted primes $\pp_1$ and  $\pp_2$ are non-singular and rational respectively.

Assume first that $\pp_1$ is rational.
Then the prime $\pp$ is unramified over $F_1$, that is, $\deg(\pp) = p \, \deg(\pp_1) = 2$.
Since the Frobenius pullback $F_1|K$ has genus $g_1=0$ and its prime $\pp_1$ is rational, it is a rational function field, say $F_1|K=K(x)|K$ with $v_{\pp_1}(x)=-1$. 
By Riemann's theorem, the space of global sections $H^0(\pp_1^n)$ of the divisor $\pp_1^n$ has dimension $n+1$, and so
    \begin{equation*}
        H^0(\pp_1^n) = K \oplus K x \oplus\dots \oplus K x^n \ \text{ for all $n\geq 0$.}
    \end{equation*}
    Similarly, since the function field $F|K$ has genus $g=1$ and $\pp$ has degree $2$ we have
    \[ \dim H^0(\pp^n)=2n \  \text{ for all $n\geq1$.}  \]
    As $\dim H^0(\pp^2) = 4 >\dim H^0(\pp_1^2) = 3$, there is a function $z\in F$ such that
    \begin{equation*}
       H^0(\pp^2) = H^0(\pp_1^2) \oplus Kz = K\oplus Kx \oplus Kx^2 \oplus Kz.
    \end{equation*}
    This function does not belong to $F_1=K(x)$ because $H^0(\pp^2) \cap F_1 =  H^0(\pp_1^2)$. 
    Equivalently $F = F_1(z) = K(x,z)$, which means that $z$ is a separating variable of $F|K$, 
    i.e., the finite extension $F|K(z)$ is separable.
    Since the square $z^2$ lies in $ H^0(\pp^4)\cap F_1 = H^0(\pp_1^4)$, there exist constants $a_i \in K$ such that
    \[ z^2 = a_0 + a_1 x + a_2 x^2 + a_3 x^3 + a_4 x^4.   \]
    Note that one of the constants $a_1,a_3$ must be non-zero, for $x$ is separable over $K(z)$.
    Looking at $z^2$ as a Laurent series in the local parameter $x^{-1}$ at $\pp_1$, we deduce from
    \cite[Proposition~4.1]{BedSt87} that the non-rationality of $\pp$ together with $\de(\pp)=1$ mean that
    $a_3=0$, $a_1\neq 0$ and $a_4\notin K^2$,
    in which case we may normalize $a_1=1$ by substituting $x$ with $a_1 x$ and $z$ with $a_1 z$, respectively. This yields the normal form in~\ref{2023_05_26_02:01}.
    
    Conversely, since a singular prime satisfies the Jacobian criterion \cite[Corollaries~4.5 and~4.6]{Sal11}, the normal form in question guarantees that a function field $F|K = K(x,z)|K$ given as in~\ref{2023_05_26_02:01} has a unique singular prime, 
    namely the pole $\pp$ of $x$, which is inertial over $F_1=K(x)$ and has geometric singularity degree $\de(\pp)=1$, 
    i.e., $F|K$ is quasi-elliptic.
    
\medskip
We assume now that the non-singular prime $\pp_1$ is non-rational. 
Since $\pp_2$ is rational the prime $\pp_1$ is unramified over $F_2$ and has $\deg (\pp_1) = 2$.
Let $e$ denote the ramification index of $\pp$ over $F_1$.
As the function  field $F_2|K$ has genus $g_2 = 0$ and its prime $\pp_2$ is rational, we infer as before that $F_2|K$ takes the form $F_2|K = K(x)|K$ with $v_{\pp_2}(x) = -1$, and that 
\begin{equation*}
    H^0(\pp_2^n) = K \oplus Kx \oplus \dots \oplus K x^n \quad \text{for all $n\geq0$.}
\end{equation*}
By Riemann's theorem, since the function fields $F_1|K$ and $F|K$ have genera $g_1=0$ and $g=1$, and the divisors $\pp_1$ and $\pp^e$ have degrees $2$ and $4$, respectively, we deduce that
\begin{align*}
    \dim H^0(\pp_1^n) &= 2n + 1 \quad \text{for all $n\geq0$,} \\
    \dim H^0(\pp^{n e}) &= 4n \quad  \text{for all $n\geq 1$.}
\end{align*}
As $\dim H^0(\pp_1) = 3 > \dim H^0(\pp_2)=2$, we can find a function $w \in F_1$ such that
\begin{equation*}
    H^0(\pp_1) = H^0(\pp_2) \oplus Kw = K \oplus Kx \oplus Kw,
\end{equation*}
which does not belong to $F_2 = K(x)$ because $H^0(\pp_1) \cap F_2 = H^0(\pp_2)$. Thus $w$ is a separating variable of $F_1|K$, or equivalently $F_1 = F_2(w) = K(x,w)$. Since $w^2$ lies in $H^0(\pp_1^2)\cap F_2 = H^0(\pp_2^2)$, there exist constants $a_i \in K$ such that
$ w^2 = a_0 + a_1 x + a_2 x^2,$
with $a_1\neq 0$ because $x$ is separable over $K(w)$.
Hence we can normalize $a_1 = 1$ by replacing $x$ with $a_1^{-1} x$.
By \cite[Proposition~4.1]{BedSt87}, the fact that $\pp_1$ is 
non-rational and non-singular means that 
$a_2 \notin K^2$,
thus yielding the following normal form of $F_1|K$
\[ F_1|K = K(x,w)|K, \quad \text{where} \quad w^2 = a_0 + x + a_2 x^2, \quad \text{ $a_0 \in K, a_2\in K\setminus K^2$.} \]
Conversely, we observe that the quadratic equation already ensures that the function field $F_1|K = K(x,w)|K$ has genus $g_1 = 0$, and also that the pole $\pp_1$ of $x$ is non-rational with residue field $\ka(\pp_1)=K(a_2^{1/2})$.

Having found a normal form for $F_1|K$ we now proceed to find a normal form for $F|K$.
As $\dim H^0(\pp^{e})=4 > \dim H^0(\pp_1)=3$ there is a function $z \in F$ such that
\begin{equation*}
    H^0(\pp^{e}) = H^0(\pp_1) \oplus Kz = K \oplus Kx \oplus Kw \oplus Kz,
\end{equation*}
which lies outside $F_1 = K(x,w)$ because $H^0(\pp^{e}) \cap F_1 = H^0(\pp_1)$. 
In particular, $z$ is a separating variable of $F|K$, i.e., $F = F_1 (z) = K(x,w,z)$. 
Because $z^2$ belongs to $H^0(\pp^{2e}) \cap F_1 = H^0 (\pp_1^2) = K \oplus Kx \oplus Kx^2 \oplus Kw \oplus Kxw$, 
there are constants $b_i \in K$ such that
\[ z^2 = b_0 + b_1 x + b_2 x^2 + b_3 w + b_4 x w. \]	
One of the constants $b_3,b_4$ must be non-zero, since $z^2$ is a separating variable of $F_1|K$ and therefore $z^2 \notin F_2 = K(x)$.

We rephrase the fact that $\pp$ has geometric singularity degree $\de(\pp)=1$ 
in terms of equations on the constants $a_i,b_i$. 
To do this we introduce the functions $\cx := x^{-1} \in F_2$, $\breve{w} := w x^{-1} \in F_1$ and $\breve{z} := z x^{-1} \in F$. Note that $\cx$ is a local parameter at both $\pp_1$ and $\pp_2$, and that the separating variables $\breve{w}$ and $\breve{z}$ satisfy the relations
\[ \breve{w}^2 = a_2 + \cx + a_0 \cx^2, \qquad \breve{z}^2 = b_2 + b_1 \cx + b_0 \cx^2 + (b_4 + b_3 \cx)\breve{w}. \]
In particular, for the residue classes $\breve{w}(\pp),\breve{z}(\pp) \in \ka(\pp)$ we have
\[ 
\breve{w}(\pp)^2 = a_2 \notin K^2, \qquad \breve{z}(\pp)^2 = b_2 + b_4 \breve{w}(\pp). 
\]
We claim that the condition $\de({\pp}) = 1$ means that $b_4 = 0$ and $b_3 \neq 0$. Indeed, when $\breve{z}(\pp)\notin\ka({\pp_1})=K(\breve{w}(\pp))$ it suffices to observe that $\pp$ is inertial over $F_1$ and $\de({\pp}) = \frac12 v_{\pp_2}(d\breve{z}^4) = \frac12 v_{\pp_2}(b_4^2 + b_3^2 \cx^2)$, by \cite[Theorem~2.3]{BedSt87},
where we have expanded the differential $d\breve z^4$ of $F_2|K$ with respect to the local parameter $\breve x$ at $\pp_2$.
In the opposite case $\breve{z}(\pp) \in K(\breve{w}(\pp))$, say $t(\pp) = 0$ for some $t$ in $\breve{z} + K + K\breve{w}$, we have $b_4 = 0$ (and therefore $b_3 \neq 0$) since $\breve{w}(\pp) \notin K$, hence the prime $\pp$ is ramified over $F_1$ with local parameter $t$ because
\[ v_{\pp_2}(dt^4) = v_{\pp_2}(d\breve{z}^4) = v_{\pp_2}(b_3^2 \cx^2) = 2 < 4, \]				
and therefore $\de({\pp}) = \frac12 v_{\pp_2}(dt^4) = 1$ by \cite[Theorem~2.3]{BedSt87}, thus proving our claim.
Note that this implies in particular that $\ka(\pp) = K(a_2^{1/2},b_2^{1/2})$.

Substituting $x,w$ with $b_3^{-2} x,b_3^{-1} w$ we can normalize $b_3=1$, so we obtain the following normal form of $F|K$
\begin{equation}\label{2024_03_18_13:10}
    F|K = K(x,w,z)|K,  \quad \text{where} \quad w^2 = a_0 + x + a_2 x^2, \,   z^2 = b_0 + b_1 x + b_2 x^2 + w.
\end{equation}
We claim that this normal form already ensures that $F|K$ is quasi-elliptic.
Indeed, if $F|K = K(x,w,z)|K$ is given as above, then it follows from 
$z^4 = (a_0 + b_0^2) + x + (a_2 + b_1^2) x^2 + b_2^2 x^4$ 
and the Jacobian criterion that the pole $\pp$ of $x$ is the only singular prime of $F|K$, and therefore that $F|K$ is quasi-elliptic.

To get the normal forms in~\ref{2023_05_26_02:02} and~\ref{2023_05_26_02:03} we look at the residue fields $\ka(\pp_1) = K(a_2^{1/2})$, $\ka(\pp) = K(a_2^{1/2},b_2^{1/2})$.
If $e=1$, i.e., $b_2 \notin K^2(a_2)$, then we normalize $b_0 = b_1 = 0$ by replacing $w$ with $w + b_0 + b_1 x$, thus obtaining the normal form in~\ref{2023_05_26_02:02}.
In the opposite case $e=2$, i.e., $b_2 \in K^2 + K^2 a_2$, we normalize $b_2=0$ by subtracting from $z$ an element of $Kx + Kw$, and then $b_0 = b_1 = 0$ by subtracting $b_0 + b_1 x$ from $w$;
this gives the normal form in~\ref{2023_05_26_02:03}.
\end{proof}

\begin{rem}\label{2023_12_21_22:00}
Item~\ref{2023_05_26_02:01} in Theorem~\ref{2023_05_26_02:00} corresponds to the case in which the restricted prime $\pp_1$ is rational, whereas items~\ref{2023_05_26_02:02} and~\ref{2023_05_26_02:03} correspond to the case where $\pp_1$ is non-rational.
As explained in the above proof, in the latter situation the quasi-elliptic function field $F|K$ admits the unified normal form
\eqref{2024_03_18_13:10},
where $a_i,b_i\in K$ and $a_2\notin K^2$.
In this case the singular prime $\pp$ is the only pole of the function $x$ and its residue fields are equal to $\ka(\pp) = K(a_2^{1/2},b_2^{1/2})$, $\ka(\pp_1) = K(a_2^{1/2})$, $\ka(\pp_2)=K$.
\end{rem}

\begin{rem}\label{2023_12_21_22:35}
The proof of the theorem
supplies the Riemann-Roch spaces associated to the only singular prime $\pp$, 
which will be essential in the next section.
Let $F|K = K(x,z)|K$ be a function field as in Theorem~\ref{2023_05_26_02:00}~\ref{2023_05_26_02:01}. 
Then the first Frobenius pullback $F_1|K=K(x)|K$ is a rational function field
and the Riemann-Roch spaces associated to $\pp$ are given by
\begin{align*}
    H^0(\pp_1^n) &= K \oplus K x \oplus \cdots \oplus K x^n \quad \text{for all $n\geq 0$}, \\
    H^0(\pp^n) &= H^0(\pp_1^n) \oplus H^0(\pp_1^{n-2}){\cdot} z \quad \text{for all $n\geq 0$}.
\end{align*}
Now let $F|K$ be a function field as in item~\ref{2023_05_26_02:02} or item~\ref{2023_05_26_02:03}, i.e., let $F|K=K(x,z)|K$ be given by $z^4 = a_0 + x + a_2 x^2 + b_2^2 x^4$, where $b_2=0$ for item~\ref{2023_05_26_02:03}.
Set $w := z^2 + b_2 x^2$.
Then 
the first Frobenius pullback $F_1|K = K(x,w)|K$ has genus $g_1=0$
and 
the second Frobenius pullback $F_2|K=K(x)|K$ is rational.
The Riemann-Roch spaces take the form
\begin{align*}
    H^0(\pp_2^n) &= K \oplus K x \oplus \cdots \oplus K x^n \quad \text{for all $n\geq 0$}, \\
    H^0(\pp_1^n) &= H^0(\pp_2^n) \oplus H^0(\pp_2^{n-1}) {\cdot} w \quad \text{for all $n\geq0$,} \\
    H^0(\pp^{ne}) &= H^0(\pp_2^n) \oplus H^0(\pp_2^{n-1}) {\cdot} w \oplus H^0(\pp_2^{n-1}) {\cdot} z \oplus H^0(\pp_2^{n-2}) {\cdot} w z  \quad \text{for all $n\geq 0$},
\end{align*}
where $e$ denotes the ramification index of the extension $\pp|\pp_1$, i.e., $e=1$ for~\ref{2023_05_26_02:02}, and $e=2$ for~\ref{2023_05_26_02:03}.
If item~\ref{2023_05_26_02:03} occurs, i.e., $e=2$, i.e., $b_2=0$, i.e., $w = z^2$, then we also have
\[ H^0(\pp^{2n+1}) = H^0(\pp_2^n) \oplus H^0(\pp_2^n) {\cdot} z \oplus H^0(\pp_2^{n-1}) {\cdot} z^2 \oplus H^0(\pp_2^{n-1}) {\cdot} z^3 \quad\text{for all $n\geq 0$}, \]
because $z^2 = w \in H^0(\pp^2)$ and $\dim H^0(\pp) = \deg(\pp) = 2$, i.e., $H^0(\pp) = K \oplus Kz$.
Note that the assertions on the Frobenius pullbacks $F_1|K$, $F_2|K$ and the Riemann-Roch spaces $H^0(\pp_2^n)$, $H^0(\pp_1^n)$, $H^0(\pp^{ne})$ 
apply verbatim to 
the unified normal form \eqref{2024_03_18_13:10} in Remark~\ref{2023_12_21_22:00}.
\end{rem}

Arguing as in 
Theorem~\ref{2023_05_26_02:00} we recover Queen's theorem in characteristic $p=3$.

\begin{thm}[{\cite[Theorem~2]{Queen71}}]\label{2024_04_16_15:20}
A one-dimensional separable function field $F|K$ of characteristic $p=3$ is quasi-elliptic, i.e., it is geometrically rational and admits a unique singular prime $\pp$, whose geometric singularity degree $\de(\pp)$ is equal to $1$, if and only if $F|K$ can be put into the following normal form
\[ F|K=K(x,z)|K, \qquad z^3 = a_0 + x + a_3 x^3, \quad \text{where $a_0\in K,a_3\in K\setminus K^3$.}\]    
The singular prime $\pp$ is the pole of the function $x$.
The Frobenius pullback $F_1|K=K(x)|K$ is rational and the singular prime $\pp$ has degrees $\deg(\pp)=3$, $\deg(\pp_1)=1$, and residue fields 
$\ka(\pp) = K(a_3^{1/3})$, $\ka(\pp_1) = K$.
The Riemann-Roch spaces are given by
\begin{align*}
    H^0(\pp_1^n) &= K \oplus K x \oplus \dots \oplus K x^n  \quad \text{for all $n\geq0$}, \\
    H^0(\pp^n) &= H^0(\pp_1^n) \oplus H^0(\pp_1^{n-1}) {\cdot} z \oplus H^0(\pp_1^{n-2}) {\cdot} z^2 \quad \text{for all $n\geq0$}.
\end{align*}
\end{thm}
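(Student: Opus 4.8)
The plan is to follow the proof of Theorem~\ref{2023_05_26_02:00}\ref{2023_05_26_02:01} line by line, replacing squares by cubes. The essential simplification in characteristic $p=3$ is that, as recorded in the discussion preceding Theorem~\ref{2023_05_26_02:00}, the non-singular prime $\pp_1$ is automatically rational, so only one case occurs. Let $F|K$ be quasi-elliptic with unique singular prime $\pp$. By that discussion the first Frobenius pullback $F_1|K$ has genus $g_1=0$ with $\pp_1$ rational; hence $\pp$ is inertial over $F_1$ and $\deg(\pp)=3\deg(\pp_1)=3$. Writing $F_1|K=K(x)|K$ with $v_{\pp_1}(x)=-1$ gives $H^0(\pp_1^n)=K\oplus Kx\oplus\cdots\oplus Kx^n$, while Riemann's theorem applied to the genus-one field $F|K$ with $\deg(\pp)=3$ yields $\dim H^0(\pp^n)=3n$ for all $n\geq1$.

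Since $\dim H^0(\pp)=3>\dim H^0(\pp_1)=2$, I would choose $z\in F$ with $H^0(\pp)=H^0(\pp_1)\oplus Kz=K\oplus Kx\oplus Kz$. As $F_1\cap H^0(\pp)=H^0(\pp_1)$, the function $z$ lies outside $F_1=F^3K$, so it is a separating variable and $F=F_1(z)=K(x,z)$, with $v_{\pp}(z)=-1$. Its cube satisfies $z^3\in H^0(\pp^3)\cap F_1=H^0(\pp_1^3)$, whence $z^3=a_0+a_1x+a_2x^2+a_3x^3$ for constants $a_i\in K$; here $a_3\neq0$ because $v_{\pp_1}(z^3)=3v_{\pp}(z)=-3$, and $(a_1,a_2)\neq(0,0)$ since the $x$-derivative $a_1-a_2x$ of this relation must be nonzero for $x$ to be separable over $K(z)$.

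The heart of the argument is to convert the two geometric facts, that $\pp$ is non-rational (automatic, as $\deg(\pp)=3$) and that $\de(\pp)=1$, into conditions on the $a_i$, exactly as the square case invoked \cite[Proposition~4.1]{BedSt87}. Regarding $z^3$ as a Laurent series in the local parameter $x^{-1}$ at $\pp_1$ and forming the differential $d(z^3)=(a_1-a_2x)\,dx$ on $F_1$ (note that $3a_3x^2=0$ in characteristic three), I expect \cite[Proposition~4.1]{BedSt87} to give $\de(\pp)=1$ precisely when $a_2=0$ and $a_1\neq0$. The degree of $\pp$ pins down the leading coefficient: the unit $\breve{z}:=zx^{-1}$ satisfies $\breve{z}(\pp)^3=a_3$, so $\ka(\pp)=K(a_3^{1/3})$ has degree $3$ over $K$, forcing $a_3\notin K^3$. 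One then normalizes $a_1=1$ by rescaling $x$ and relabels the coefficients, obtaining the asserted normal form. This translation of $\de(\pp)=1$ into $a_2=0,\ a_1\neq0$ through the Bedoya--Stöhr invariant is the step I expect to demand the most care, since the differential of a cube in characteristic three behaves differently from the differential of a square in characteristic two.

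For the converse I would start from $z^3=a_0+x+a_3x^3$ with $a_3\notin K^3$. The Jacobian criterion \cite[Corollaries~4.5 and~4.6]{Sal11} localizes the singularities: in the affine chart the partial derivative in $x$ equals $1$, so there are no affine singular primes, and the only singular prime is the pole $\pp$ of $x$, which is inertial over $F_1=K(x)$ with $\de(\pp)=1$ by the same Laurent-series analysis. Geometric rationality follows because over $\ov{K}$ one writes $a_3=c^3$ and substitutes $w:=z-cx$ to obtain $w^3=a_0+x$, so $x=w^3-a_0$ and $\ov{K}F=\ov{K}(w)$ is rational, giving $\ov{g}=0$; Rosenlicht's genus-drop formula then yields $g=\ov{g}+\de(\pp)=1$, so $F|K$ is quasi-elliptic. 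Finally the residue fields $\ka(\pp)=K(a_3^{1/3})$, $\ka(\pp_1)=K$ and the Riemann--Roch spaces $H^0(\pp^n)=H^0(\pp_1^n)\oplus H^0(\pp_1^{n-1})z\oplus H^0(\pp_1^{n-2})z^2$ are read off from $v_{\pp}(z)=-1$ and $v_{\pp}(z^2)=-2$ together with the dimension count $\dim H^0(\pp^n)=3n$, the three summands being $F_1$-independent and landing inside $H^0(\pp^n)$.
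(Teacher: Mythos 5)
Your proof follows exactly the route the paper prescribes: the paper's entire proof of this theorem is the remark that one argues as in Theorem~\ref{2023_05_26_02:00}~\ref{2023_05_26_02:01}, and nearly all of your adaptation is correct and faithful to that template --- the automatic rationality of $\pp_1$ in characteristic $3$, the inertness of $\pp$ with $\deg(\pp)=3$, the correct shift of the defect from $H^0(\pp^2)$ (characteristic $2$) to $H^0(\pp)$ (characteristic $3$), the relation $z^3=a_0+a_1x+a_2x^2+a_3x^3$ with the separability constraint $(a_1,a_2)\neq(0,0)$, the translation of $\de(\pp)=1$ into $a_2=0$, $a_1\neq 0$ via \cite[Proposition~4.1]{BedSt87} (this is precisely how the paper uses that proposition in characteristic $2$), the converse via the Jacobian criterion and Rosenlicht, and the Riemann--Roch spaces.

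The one genuine gap is your argument that $a_3\notin K^3$. From $\breve z(\pp)^3=a_3$ you may conclude only the inclusion $K(a_3^{1/3})\subseteq\ka(\pp)$; the asserted equality $\ka(\pp)=K(a_3^{1/3})$ presupposes that $\breve z(\pp)$ generates the residue field, i.e.\ that $\breve z(\pp)\notin K$, which is equivalent to $a_3\notin K^3$ --- the very statement under proof. If $a_3\in K^3$ then $\breve z(\pp)=a_3^{1/3}\in K$, the inclusion degenerates to $K\subseteq\ka(\pp)$, and no contradiction with $\deg(\pp)=3$ arises, so the step as written is circular. The paper avoids this by letting \cite[Proposition~4.1]{BedSt87} carry the whole translation (in characteristic $2$ it is the non-rationality of $\pp$, fed into that proposition, that yields $a_4\notin K^2$), and you could do the same here; alternatively the gap closes in two lines: if $a_3=c^3$ with $c\in K$, set $w:=z-cx$, so that $w^3=a_0+a_1x+a_2x^2\in F_1$, and since $\pp$ is inertial over $F_1$ the integer $3v_\pp(w)=v_{\pp_1}(a_0+a_1x+a_2x^2)$ lies in $\{0,-1,-2\}$, forcing $a_1=a_2=0$; this contradicts the separability condition $(a_1,a_2)\neq(0,0)$ (equivalently, it would give $z=w+cx\in F_1$). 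With this repair your proof is complete.
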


With the Riemann-Roch spaces we can determine the isomorphisms classes of all quasi-elliptic function fields.

\begin{prop}\label{2024_08_10_15:40}
Let $F|K$ and $F'|K$ be two quasi-elliptic function fields.
If both are defined as in item~\ref{2023_05_26_02:01}, \ref{2023_05_26_02:02} or~\ref{2023_05_26_02:03} in Theorem~\ref{2023_05_26_02:00},
then the $K$-isomorphisms $F' \overset\sim\to F$ are given by the transformations
\begin{enumerate}[\upshape (i)]
    \item $(x',z') \mapsto (\eps^2 (x + \al), \eps (z + \beta + \ga x + \tau x^2))$,
    \\
    where $\eps,\al,\be,\ga,\tau \in K$ are constants such that 
    $\eps\neq 0$,
    $\eps^2 a_2' = a_2 + \ga^2$,
    $\eps^6 a_4' = a_4 + \tau^2$,
    $\eps^{-2} a_0' = a_0 + \be^2 + \al + \al^2 (a_2 + \ga^2) + \al^4 (a_4 + \tau^2)$;
    
    \item \label{2024_08_10_15:42}
    $(x',z') \mapsto (\eps^4 (x + \al),\eps (z + \beta + \ga x + \tau z^2 + \tau b_2 x^2))$,
    \\
    where $\eps,\al,\be,\ga,\tau \in K$ are constants such that 
    $\eps\neq 0$,
    $\eps^4 a_2' = a_2 + \tau^4$, 
    $\eps^6 b_2' = b_2 + \ga^2 + \tau^2 a_2$,
    $\eps^{-4} a_0' = a_0 + \tau^4 a_0^2 + \be^4 + \al + \al^2 (a_2 + \tau^4) + \al^4 (b_2^2 + \tau^4 a_2^2 + \ga^4) $;

    \item \label{2024_08_10_15:43}
    $(x',z') \mapsto (\eps^4 (x + \al),\eps (z + \beta ))$,
    \\
    where $\eps,\al,\be\in K$ are constants such that 
    $\eps\neq 0$, 
    $\eps^{-4} a_0' = a_0 + \be^4 + \al + \al^2 a_2$,
    $\eps^4 a_2' = a_2$.
\end{enumerate}
If $F|K$ and $F'|K$ are defined as in Theorem~\ref{2024_04_16_15:20},
then the $K$-isomorphisms $F' \overset\sim\to F$ are given by
\[ (x',z') \mapsto (\eps^3 (x + \al), \eps (z + \beta + \ga x)), \]
where $\eps,\al,\be,\ga \in K$ are constants such that $\eps\neq 0$,
$\eps^{-3} a_0' = a_0 + \be^3 - \al - \al^3 (a_3 +\ga^3)$,
$\eps^{6} a_3' = a_3 + \ga^3$.

In particular, $F|K$ and $F'|K$ are $K$-isomorphic if and only if such constants do exist.
\end{prop}

We will only prove the second item,
the proofs of the remaining parts being similar.
Note that each $K$-isomorphism $F' \overset\sim\to F$ preserves the only singular primes $\pp'$ and $\pp$ of $F'|K$ and $F|K$, and also the ramification indices of $\pp_n'|\pp_{n+1}'$ and $\pp_n|\pp_{n+1}$ for all $n$.
Thus if $F'|K$ is given as in~\ref{2023_05_26_02:01}, \ref{2023_05_26_02:02} or~\ref{2023_05_26_02:03} in Theorem~\ref{2023_05_26_02:00}, then so is $F|K$.

\begin{proof}[Proof of~\ref{2024_08_10_15:42}]
Let $\sig:F' \overset\sim\to F$ be a $K$-isomorphism.
As $\sig$ preserves the only singular primes $\pp'$ and $\pp$ of $F'|K$ and $F|K$ respectively, it induces an isomorphism $H^0(\pp_m'^n) \overset\sim\to H^0(\pp_m^n)$ for each $m$ and $n$.
Thus the incidence properties of $x'$ and $z'$ inherit as follows
\begin{align*}
    \sig(x') \in H^0(\pp_2) \setminus K , \quad
    \sig(z') \in H^0(\pp) \setminus H^0(\pp_1).
\end{align*}
Moreover, the functions $\sig(x')$ and $\sig(z')$ also satisfy the polynomial equation with the coefficients $a_i'$, $b_i'$.
In this equation we substitute $\sig(x')$ and $\sig(z')$ by the corresponding $K$-linear combinations of $1,x$ and $1,x,z^2 + b_2 x^2,z$ respectively, and we replace $z^4$ with the right-hand side of the equation in the announcement of Theorem~\ref{2023_05_26_02:00}~\ref{2023_05_26_02:02}.
As the four functions $1$, $x$, $x^2$, $x^4$ are $K$-linearly independent, we obtain $4$ polynomial equations between $a_i$, $b_i$, $a_i'$, $b_i'$ and the $2 + 4 = 6$ coefficients of the expansions of $\sig(x')$ and $\sig(z')$.
\end{proof}

The proposition immediately yields the set of all automorphisms of every quasi-elliptic function field.
Note that the substitutions we obtain are further simplified by the condition that the constant $a_4$ (resp. $a_2$) 
in Theorem~\ref{2023_05_26_02:00}~\ref{2023_05_26_02:01} (resp. Theorem~\ref{2023_05_26_02:00}, \ref{2023_05_26_02:02} and \ref{2023_05_26_02:03}) is not a square.

\begin{cor}
Let $F|K$ be a quasi-elliptic function field.
If $F|K$ is defined as in Theorem~\ref{2023_05_26_02:00}, items~\ref{2023_05_26_02:01}, \ref{2023_05_26_02:02} or~\ref{2023_05_26_02:03}, then the automorphisms of $F|K$ are given by
\begin{enumerate}[\upshape (i)]
    \item $(x,z) \mapsto (\eps^2(x + \al), \eps (z + \beta + \ga x) )$, 
    \\
    where $\eps,\al,\be,\ga \in K$ satisfy $\eps^3 = 1$, $\ga^2 = (\eps^2 + 1) a_2 $, $\be^2 = (\eps + 1) a_0 + \al + \al^2 (a_2 + \ga^2) + \al^4 a_4$.
    \item $ (x,z) \mapsto ( x + \al, z + \beta)$, 
    \\
    where $\al,\be \in K$ satisfy $\be^4 = \al + \al^2 a_2 + \al^4 b_2^2$.
    \item $ (x,z) \mapsto ( x + \al, z + \beta)$, 
    \\
    where $\al,\be \in K$ satisfy $\be^4 = \al + \al^2 a_2$.
\end{enumerate}
If $F|K$ is defined as in Theorem~\ref{2024_04_16_15:20} then the automorphisms of $F|K$ take the form
\[ (x,z) \mapsto (\eps (x + \al), \eps (z + \beta )),\]
where $\eps,\al,\be \in K$ satisfy $\eps^2 = 1$ and $\be^3 = (\eps - 1) a_0 + \al + \al^3 a_3$.
\end{cor}

\begin{rem}
Let $C|K$ be a quasi-elliptic curve, i.e., let $C$ be a regular proper geometrically integral curve over $K$ of genus $g=1$, which is non-smooth.
It is known that $C$ can be embedded in $\PP^2(K)$ or $\PP^3(K)$ (see e.g. \cite[Theorem~11.13]{Tan21}).
The Riemann-Roch spaces allow for explicit realizations.
Let $\pp$ be the only non-smooth point on $C$.
In characteristic $p=3$ the function field $F|K = K(C)|K$ satisfies Theorem~\ref{2024_04_16_15:20}, hence $\deg(\pp)=3 \geq 2g+1$ and $H^0(\pp)=K \oplus Kx \oplus Kz$, so the complete linear system $|\pp|$ is very ample and it realizes $C$ as the plane curve 
\[ z^3 = a_0 y^3 + x y^2 + a_3 x^3, \quad \text{where $a_i\in K$, $a_3\notin K^3$.}\]
In characteristic $p=2$ there are two possibilities:
if $F|K$ satisfies Theorem~\ref{2023_05_26_02:00}~\ref{2023_05_26_02:01}, then $\deg(\pp^2)=4\geq 2g+1$ and $H^0(\pp^2)=K \oplus Kx \oplus Kx^2 \oplus Kz$, and so $|\pp^2|$ realizes $C$ as the complete intersection of the following two quadric surfaces in $\PP^3(K) = \Proj K[x_0,x_1,x_2,x_3]$
\[ x_1^2 = x_0 x_2, \quad x_3^2 = a_0 x_0^2 + x_0 x_1 + a_2 x_0 x_2 + a_4 x_2^2, \quad \text{where $a_i\in K$, $a_4\notin K^2$;} \]
if $F|K$ is given in normal form~\eqref{2024_03_18_13:10} (see Remark~\ref{2023_12_21_22:00}), with $a_i,b_i\in K$ and $a_2\notin K^2$, then $\deg(\pp^e)=4$ and $H^0(\pp^e)=K \oplus Kx \oplus Kw \oplus Kz$ ($e$ is the ramification index of $\pp|\pp_1$), hence $|\pp^e|$ realizes $C$ as the complete intersection of the two quadric surfaces in $\PP^3(K)$ below
\[ x_2^2 = a_0 x_0^2 + x_0 x_1 + a_2 x_1^2, \quad x_3^2 = b_0 x_0^2 + b_1 x_0 x_1 + b_2 x_1^2 + x_0 x_2, \quad \text{where $a_i,b_i\in K$, $a_2\notin K^2$.}\]
\end{rem}

\begin{rem}
Every quasi-elliptic function field $F|K$ with a distinguished rational point $\fq$ admits a \emph{Weierstrass normal form}, given by the very ample divisor $\fq^3$.
Indeed, suppose first that $p=2$, i.e., $F|K$ satisfies Theorem~\ref{2023_05_26_02:00}.
Substituting $x$ for $x+r$, where $r$ is the residue class $x(\fq) \in K$, we obtain $x(\fq)=0$, and 
after subtracting $z(\fq) \in K$ from $z$ we normalize $z(\fq) = 0$, i.e., $a_0=0$.
Defining $\breve x = x^{-1}$, $\breve z = z x^{-2}$ (for~\ref{2023_05_26_02:01}) or $\breve z=z x^{-1}$, $\breve w = z^2 x^{-1}$ (for~\ref{2023_05_26_02:02} and~\ref{2023_05_26_02:03}) we obtain the Weierstrass normal forms
\begin{enumerate}[(i)]
    \item $\breve z^2 = \breve x^3 + a_2 \breve x^2 + a_4$, where $a_2 \in K$, $a_4 \in K \setminus K^2$;
    \item $\breve z^2 = \breve w^3 + a_2 \breve w + b_2$, where $a_2 \in K \setminus K^2$, $b_2 \in K \setminus K^2(a_2)$;
    \item $\breve z^2 = \breve w^3 + a_2 \breve w$, where $a_2 \in K \setminus K^2$.
\end{enumerate}
If $p=3$, i.e., $F|K$ satisfies Theorem~\ref{2024_04_16_15:20}, we can normalize $z(\fq)=x(\fq)=a_0=0$, and by setting $\breve z = zx^{-1}$, $\breve x = x^{-1}$ we get the Weierstrass normal form
\[ \breve z^3 = \breve x^2 + a_3,  \quad \text{where $a_3 \in K \setminus K^3$}.\]
\end{rem}

Recall that the \emph{degree of imperfection} of a field $K$ of characteristic $p>0$ is the number of elements of any $p$-basis of the purely inseparable extension $K^p \su K$;
see \cite[V, p.\,170, Ex.\,1]{Bour90}.
Equivalently, this integer is equal to $\log_p [K:K^p]$.

\begin{prop}
Let $F|K$ be a quasi-elliptic function field. If the base field $K$ has degree of imperfection $1$, then the singular prime $\pp$ has degree $\deg(\pp) = p = \mathrm{char}(K)$.
\end{prop}

\begin{proof}
The case of item~\ref{2023_05_26_02:02} in Theorem~\ref{2023_05_26_02:00} cannot occur, since $K = K^2(a_2)$.
\end{proof}

Let $k$ be an algebraically closed field of characteristic $p\in \{ 2, 3 \}$.
Let $S$ be a quasi-elliptic surface, 
with associated quasi-elliptic fibration $f:S\to B$.
This means that the generic fibre of $f$ is a quasi-elliptic curve over $K=k(B)$, or equivalently, that each special fibre is a cuspidal rational curve.
As $B$ is a curve the field $K$ has degree of imperfection $1$.
Hence the proposition implies that the non-smooth locus of $f$, which is a smooth curve on $S$, is a purely inseparable cover of degree $p$ of $B$. In particular, it intersects each special fibre at a unique point, the cusp, with multiplicity $p$.
This was originally proved by Bombieri and Mumford \cite[Proposition~1]{BM76}; see also \cite[Proposition~4.1.14]{CDL23}.

\section{A singular prime that is a canonical divisor}
\label{2023_04_10_16:30}

Let $F|K$ be a one-dimensional separable function field of genus $g=3$ in characteristic $p=2$.
Suppose it is geometrically rational, i.e., the extended function field $\ov K F|\ov K$ has genus $\ov g=0$.
According to \cite[Corollary~2.7]{HiSt22}, the first and second Frobenius pullbacks $F_1|K$ and $F_2|K$ of $F|K$ have genera $g_1\leq 1$ and $g_2=0$.
If $g_1=0$ then the function field $F|K$ is hyperelliptic, because then $F_1|K$ is a quadratic subfield of genus zero.
Therefore, as our interest in this paper lies in non-hyperelliptic function fields, we assume in this section that $g_1=1$, which means that $F_1|K$ is a quasi-elliptic function field, i.e., there is a unique prime $\pp$ of $F|K$ whose restriction $\pp_1$ to $F_1|K$ is singular, with geometric singularity degree $\de(\pp_1)=1$.
Since $\delta(\pp_1)-\delta(\pp_2) \leq \frac 12 (\delta(\pp)-\delta(\pp_1))$ by \cite[Proposition~2.4]{HiSt22}, 
it follows that $\delta(\pp) \geq 3$, and therefore,
by Rosenlicht's genus drop formula, $\delta(\pp)=3$ and $\pp$ is the only singular prime of $F|K$.

In this section we investigate the class of function fields $F|K$ satisfying the above properties, 
plus the additional condition that the divisor $\pp$ is canonical.
Our analysis builds on Section~\ref{2023_06_20_17:00}, which supplies a full description of the quasi-elliptic Frobenius pullbacks $F_1|K$.
As follows from Theorem~\ref{2023_05_26_02:00} (or \cite[Theorem~2.24]{HiSt22}), 
the restricted prime $\pp_3$ is always rational.
However, the non-singular prime $\pp_2$ can be rational or non-rational.

\begin{thm}\label{2023_05_18_23:10}
A one-dimensional separable function field $F|K$ of characteristic $p=2$ and genus $g = 3$ is geometrically rational 
and admits a prime $\pp$ such that $\de(\pp) = 3$, $\de(\pp_1) = 1$, 
and such that $\pp$ is a canonical divisor,
if and only if
$F|K$ can be put into one of the following normal forms.

\begin{enumerate}[\upshape (i)]
    \item \label{2024_04_12_11:10} 
    $ F|K = K(x,y)|K$, where 
    \[ y^4 = a_0 + x + a_2 x^2 + a_4 x^4, \]
    and $a_0,a_2,a_4\in K$ are constants satisfying $a_4\notin K^2$.
    Here the singular prime $\pp$ has degrees $\deg(\pp)=4$, $\deg(\pp_1)=2$, $\deg(\pp_2)=1$, and residue fields $\ka({\pp}) = K(a_4^{1/4})$, $\ka({\pp_1}) = K(a_4^{1/2})$, $\ka(\pp_2) = K$.
    
    \item \label{2024_04_12_11:15}
    $F|K = K(x,z,y)|K$, where 
    \begin{align*}
        z^4 = a_0 + x + a_2 x^2, \quad
        y^2 = c_0 + c_1 x + z + c_2 z^2,
    \end{align*}
    and $a_0,a_2,c_0,c_1,c_2 \in K$ are constants satisfying $a_2\notin K^2$ and
    \begin{equation}\label{2024_04_12_14:45}
        c_1\neq 0 \, \text{ or }\, c_2\notin K^2 + K^2 a_2.
    \end{equation}
    Here $\pp$ has degrees $\deg(\pp)=4$, $\deg(\pp_1)=\deg(\pp_2)=2$, $\deg(\pp_3)=1$, and residue fields $\ka(\pp) = K(a_2^{1/2}, (c_2 + a_2^{-1/2} c_1)^{1/2})$, $\ka({\pp_1}) = \ka({\pp_2}) = K(a_2^{1/2})$, $\ka(\pp_3) = K$.
\end{enumerate}
In both cases the singular prime $\pp$ is the only pole of the function $x$.
\end{thm}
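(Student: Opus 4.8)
The plan is to mirror the strategy of Theorem~\ref{2023_05_26_02:00}: since we have assumed $g_1=1$, the first Frobenius pullback $F_1|K$ is quasi-elliptic, so I would start by invoking that theorem to put $F_1|K$ into its unified normal form. The preliminary discussion already fixes the degrees $\de(\pp)=3$, $\de(\pp_1)=1$, and the computations $g_1=1$, $g_2=0$, $g_3=0$. By Theorem~\ref{2023_05_26_02:00} the restricted prime $\pp_1$ is either rational or not, and this dichotomy is what will produce the two normal forms~\ref{2024_04_12_11:10} and~\ref{2024_04_12_11:15}. In the first case $\pp_1$ is rational and $F_1|K=K(x,w)|K$ is the rational function field with the relation coming from item~\ref{2023_05_26_02:01}; in the second case $\pp_1$ is non-rational and $F_1|K$ takes the unified form~\eqref{2024_03_18_13:10}.

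Next I would build $F|K$ over $F_1|K$ by the same Riemann-Roch bookkeeping used throughout the section. The key input is that $\pp$ is canonical, so $H^0(\pp)$ has dimension $g=3$ and $\deg(\pp)=2g-2=4$; combined with $\de(\pp)=3$ this pins down $\deg(\pp_1)=2$ in both cases. Using the Riemann-Roch spaces for $\pp_1^n$ recorded in Remark~\ref{2023_12_21_22:35}, I would locate a new function $y\in F$ with $H^0(\pp^e)=H^0(\pp_1)\oplus Ky$ (for the appropriate ramification index $e$) that is a separating variable, giving $F=F_1(y)$, and then observe that $y^2$ lies in $H^0(\pp^{2e})\cap F_1=H^0(\pp_1^2)$, which expresses $y^2$ as a $K$-linear combination of the basis functions of that space. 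This produces the quadratic relation for $y$ in each case — $y^4=a_0+x+a_2x^2+a_4x^4$ after eliminating $w$ in case~\ref{2024_04_12_11:10}, and $y^2=c_0+c_1x+w+c_2w^2$ in case~\ref{2024_04_12_11:15}.

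The hard part will be translating the hypothesis $\de(\pp)=3$ (equivalently the precise branching and residue-field behaviour of $\pp$) into the explicit arithmetic conditions on the constants: $a_4\notin K^2$ in the first case, and the disjunction~\eqref{2024_04_12_14:45} ($c_1\neq0$ or $c_2\notin K^2+K^2a_2$) in the second. As in the proof of Theorem~\ref{2023_05_26_02:00}, I expect this to require expanding the relevant differential $dy^4$ (or $dt^4$ for a suitable local parameter $t$) in the local parameter $\cx=x^{-1}$ at $\pp_3$ and reading off the valuation via \cite[Theorem~2.3]{BedSt87}, together with \cite[Proposition~4.1]{BedSt87} to detect non-rationality. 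The subtlety is that $\de(\pp)=3$ and $\de(\pp_1)=1$ force a definite pattern of ramification across the tower $F\supset F_1\supset F_2\supset F_3$, and disentangling which linear combinations of constants can be normalized away (by the substitutions $x\mapsto\lambda x$, $y\mapsto\mu y$, and subtractions of lower-order terms) without violating the singularity degree is where the bulk of the case analysis lives.

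Finally I would verify the converse: given either normal form with the stated conditions, the Jacobian criterion (via \cite[Corollaries~4.5 and~4.6]{Sal11}) shows that the pole $\pp$ of $x$ is the unique singular prime, and a direct genus and singularity-degree computation — using that $F_1|K$ is quasi-elliptic so $\de(\pp_1)=1$, and that the drop from $\pp$ to $\pp_1$ contributes the remaining $\de(\pp)-\de(\pp_1)=2$ — confirms $\de(\pp)=3$, $g=3$, and that $\pp$ is canonical (the degree $4=2g-2$ together with $\dim H^0(\pp)=3$ identifying $|\pp|$ with the canonical system). The residue-field and degree data listed in each item then follow by tracking $\ka(\pp_i)$ up the Frobenius tower exactly as in Theorem~\ref{2023_05_26_02:00}.
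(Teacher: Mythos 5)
Your skeleton is essentially the paper's: put the quasi-elliptic pullback $F_1|K$ into the normal forms of Theorem~\ref{2023_05_26_02:00}, split into two cases according to rationality of the restricted prime, adjoin $y$ via the bookkeeping $H^0(\pp)=H^0(\pp_1)\oplus Ky$ and read off the relation for $y^2$ inside $H^0(\pp_1^2)$, then prove the converse with the Jacobian criterion and \cite[Theorem~2.3]{BedSt87}. One indexing slip: the dichotomy is carried by $\pp_2$, not $\pp_1$ --- the prime $\pp_1$ is the singular prime of the quasi-elliptic field $F_1|K$ and is never rational, so ``$\pp_1$ is rational'' should read ``$\pp_2$ is rational'' throughout.

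The genuine gap is in the step you wave through with ``combined with $\de(\pp)=3$ this pins down $\deg(\pp_1)=2$ in both cases.'' When $\pp_2$ is non-rational, nothing in the toolkit you list excludes the alternative that $\pp$ is \emph{ramified} over $F_1$, i.e.\ $\deg(\pp_1)=\deg(\pp)=4$ and $F_1|K$ has the normal form of Theorem~\ref{2023_05_26_02:00}~\ref{2023_05_26_02:02} rather than~\ref{2023_05_26_02:03}. Linear algebra on Riemann--Roch spaces cannot rule this out: with ramification index $e=2$ one gets $H^0(\pp^2)\cap F_1=H^0(\pp_1)$, a space of dimension $4$, into which the squares of elements of $H^0(\pp)$ fit without any contradiction. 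The paper needs a new ingredient here, Lemma~\ref{2024_04_12_11:25}: if $\pp$ is ramified over $F_1$ then $\de(\pp)\geq p\,\de(\pp_1)+\tfrac{p-1}{2}\deg(\pp)=2+2=4>3$; its proof is not a routine expansion of a differential in $x^{-1}$, since it requires reducing to the non-decomposed case by passing to the separable closure (\cite[Proposition~2.10]{HiSt22}) and then an order estimate at a rational restriction. Without this lemma (or a substitute) you cannot invoke normal form~\ref{2023_05_26_02:03} for $F_1|K$, the identity $H^0(\pp)\cap F_1=H^0(\pp_1)$ underlying your choice of $y$ is unjustified, and condition~\eqref{2024_04_12_14:45} --- which is precisely the translation of ``$\pp$ unramified over $F_1$'' through the residue field $\ka(\pp)=K(\aC^{1/2},(\nA+\aC^{-1/2}\mB)^{1/2})$ --- has no derivation. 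A smaller misattribution: $a_4\notin K^2$ in case~\ref{2024_04_12_11:10} is not extracted from $\de(\pp)=3$; it is already part of the quasi-elliptic normal form of $F_1|K$, encoding $\de(\pp_1)=1$ and the non-rationality of $\pp_1$.
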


Note that condition~\eqref{2024_04_12_14:45} rephrases as $\ka(\pp)\supsetneqq \ka(\pp_1)$, i.e., $\pp$ is unramified over $F_1$.

\begin{proof}
Let $F|K$ be a function field as in the statement of the theorem and let $\pp$ be its only singular prime.
Recall that the condition that $\pp$ is a canonical divisor means that 
\[ \deg(\pp)=2g-2=4 \quad \text{ and } \quad \dim H^0(\pp)=g=3. \]
Recall also that the Frobenius pullback $F_1|K$ is quasi-elliptic, and that the restricted prime $\pp_3$ is rational.

We suppose first that the non-singular prime $\pp_2$ is rational.
By Theorem~\ref{2023_05_26_02:00}~\ref{2023_05_26_02:01}, the quasi-elliptic Frobenius pullback $F_1|K$ admits the following normal form
\[ F_1|K=K(x,z)|K, \, \text{ where } \, z^2 = a_0 + x + a_2 x^2 + a_4 x^4, \, a_i \in K,a_4\notin K^2, \]
and $\ka(\pp_1) = K(a_4^{1/2})$, $\ka(\pp_2) = K$.
By Remark~\ref{2023_12_21_22:35} we also have
\begin{align*}
    H^0(\pp_1) & 
       = K \oplus K x \quad \text{and} \quad
    H^0(\pp_1^2) = K \oplus K x \oplus K x^2 \oplus K z.
\end{align*}
Clearly $\deg(\pp) = 2 \deg(\pp_1)$, i.e., the extension $\pp|\pp_1$ is unramified.
Since $\dim H^0(\pp) = 3 > \dim H^0(\pp_1)=2$ and $H^0(\pp) \cap F_1 =  H^0(\pp_1)$, there exists a function $y\in F$ such that
\[ H^0(\pp) = H^0(\pp_1)\oplus Ky = K \oplus Kx \oplus Ky, \]
which is a separating variable of $F|K$, i.e., $F=F_1(y)=K(x,z,y)$.
Since the square $y^2$ lies in $H^0(\pp^2) \cap F_1 = H^0(\pp_1^2)$, but not in $F_2=K(x)$ as it is a separating variable of $F_1|K$, there are constants $b_i \in K$ with $b_3 \neq 0$ such that
\[ y^2 = b_0 + b_1 x  + b_2 x^2 + b_3 z. \]
Substituting $x$ and $z$ with $b_3^{-2} x$ and $b_3^{-1} z$ respectively we may normalize $b_3=1$. 
Replacing $z$ with $z + b_0 + b_1 x + b_2 x^2$ we can further normalize $b_0 = b_1 = b_2 = 0$, i.e., $z = y^2$.
This gives the normal form in~\ref{2024_04_12_11:10}.

Conversely, the normal form ensures that the function field $F|K=K(x,y)|K$ has the desired properties.
Indeed, 
the pole $\pp$ of $x$ is inertial over $F_2=K(x)$ with residue field $\ka(\pp) = K(a_4^{1/4})$, and 
has geometric singularity degree $\de(\pp)=2\de(\pp_1) + \frac12 v_{\pp_2} \big( d(\frac yx)^4 \big) = 3$ by \cite[Theorem~2.3]{BedSt87}.
By the Jacobian criterion there are no singular primes other than $\pp$, i.e., $F|K$ has genus $g=3$.
Since the divisor $\pp$ has degree $\deg(\pp) = 4 = 2g-2$, it follows from the Riemann-Roch theorem that $\dim H^0(\pp) \leq g = 3$, with equality if and only if the divisor $\pp$ is canonical.
As the power $y^2$  belongs to $ H^0(\pp_1^2) = H^0(\pp^2) \cap F_1$, so that $y\in H^0(\pp)$, we conclude that  $1,x,y$ are three linearly independent elements of $H^0(\pp)$.

\medskip

Now we assume that the restricted prime $\pp_2$ is non-rational.
Since the prime $\pp_3$ is rational, this means that the extension $\pp_2|\pp_3$ is unramified.
By Lemma~\ref{2024_04_12_11:25} below, the extension $\pp|\pp_1$ is also unramified.
Let $e_1$ denote the ramification index of $\pp_1|\pp_2$.
Since $\deg(\pp)=2\deg(\pp_1)$, the condition $\deg(\pp)=4$ means that $\deg(\pp_1)=\deg(\pp_2)=2$, i.e., $e_1=2$.
Then by Theorem~\ref{2023_05_26_02:00}~\ref{2023_05_26_02:03} the quasi-elliptic function field $F_1|K$ admits the following normal form
\[ F_1|K=K(x,z)|K, \,\text{ where }\, z^4 = a_0 + x + a_2 x^2, \, a_0 \in K, \, a_2 \in K \setminus K^2, \]
and $\ka(\pp_1) = \ka(\pp_2) = K(a_2^{1/2})$, $\ka(\pp_3)=K$.
In addition, we know from Remark~\ref{2023_12_21_22:35} that
\begin{align*}
    H^0(\pp_1) &= K \oplus Kz \quad\text{and} \quad
    H^0(\pp_1^2) = K \oplus Kx \oplus K z^2 \oplus Kz.
\end{align*}
Because $\dim H^0(\pp) = 3 > \dim H^0(\pp_1) = 2$ and $H^0(\pp) \cap F_1 = H^0(\pp_1)$ we can write
\[ H^0(\pp) = H^0(\pp_1) \oplus Ky = K \oplus Kz \oplus Ky, \]
where $y\in F$ is a separating variable of $F|K$, i.e., $F = F_1(y) = K(x,z,y)$.
Since $y^2$ lies in 
$H^0(\pp^{2}) \cap F_1 = H^0(\pp_1^{2})$, 
but not in $F_2 = K(x,z^2)$ as it is a separating variable of $F_1|K$, 
there exist constants $c_i\in K$ with $c_3\neq0$ such that
\[ y^2 = c_0 + c_1 x + c_2 z^2 + c_3 z. \]
Substituting $x, z$ with $c_3^{-4}x, c_3^{-1} z$ we can normalize $c_3=1$, thus obtaining the equations in the normal form in~\ref{2024_04_12_11:15}.
As follows from the equality $\ka(\pp) = K(a_2^{1/2}, (c_2 + a_2^{-1/2} c_1)^{1/2})$, which will be proved in the next paragraph, the condition that $\pp|\pp_1$ is unramified means that $c_1\neq0$ or $c_2\notin K^2 + K^2 a_2$.
 
It remains to verify 
that the normal form ensures the following:
that the function field $F|K = K(x,z,y)|K$ has genus $3$, 
that the pole $\pp$ of $x$ has geometric singularity degree $3$, 
and that the divisor $\pp$ is canonical.
Consider
the function $\cx := x^{-1}\in F_3$, 
a local parameter at the rational prime $\pp_3$, and the functions $ \breve{z}:= z x^{-1} \in F_1$ and $\cy:= yx^{-1} \in F$, which satisfy the relations
\[ 
\breve{z}^4 = a_2 \cx^2 + \cx^3 + a_0 \cx^4 \quad \text{and} \quad
\cy^2 = c_1 \cx + c_0 \cx^2 + c_2 \breve{z}^2 + \cx \breve{z}. 
\]
Computation shows that $\breve{y}^8$ and $\breve{z}^8$ are polynomial expressions in $\breve x$ so that 
\[ \bigg( \frac{\cy}{\breve{z}}\bigg)^8 
= (c_2^2 + a_2^{-1} c_1^2)^2 + (a_2^{-1} + a_2^{-4} c_1^4) \cx^2 + a_2^{-2} \cx^3 + \cdots.
\]
If $\frac{\cy}{\breve{z}} (\pp) \notin \ka({\pp_1})	= K(a_2^{1/2})$, then $\pp$ is inertial over $F_1$ and $\de(\pp) = 2 \cdot 1 + \frac 12 v_{\pp_3} \big( d( \frac{\cy}{\breve{z}} )^8 \big) = 3$ by \cite[Theorem~2.3]{BedSt87}. 
In the opposite case $\frac{\cy}{\breve{z}} (\pp) \in K\big(\frac{\breve{z}^2}{\cx}(\pp)\big)$, say $t(\pp) = 0$ for some $t$ in $\frac{\cy}{\breve{z}}	+ K + K {\cdot} \frac{\breve{z}^2}{\cx}$, the prime $\pp$ is ramified over $F_1$ with local parameter $t$ because
\[ t^8 = (a_2^{-1} + a_2^{-4} c_1^4) \cx^2 + a_2^{-2} \cx^3 + \cdots,
\]
and therefore $\de(\pp) = 2\cdot 1 + \frac12 v_{\pp_3}(dt^8) = 3$. 
Thus 
$\de(\pp)=3$.
Since 
$y^8 = f(x)$ with $f'(x)=1$,
we conclude from the Jacobian criterion that there are no singular primes other than $\pp$, whence $F|K$ has genus $g=3$.

We finally check that the divisor $\pp$ is canonical. Clearly $\deg(\pp) = 2\deg(\pp_1) = 4 = 2g-2$. As the squares $z^2$ and $y^2$ both lie in $H^0(\pp_1^2) = H^0(\pp^{2}) \cap F_1$, hence  $K \oplus Kz \oplus Ky\subseteq H^0(\pp)$, we conclude that $\dim H^0(\pp) = 3 = g$, i.e., $\pp$ is a canonical divisor.
\end{proof}

In the above proof we used the following technical result, which is valid in every characteristic $p>0$.

\begin{lem}\label{2024_04_12_11:25}
Let $F|K$ be a one-dimensional separable function field of characteristic $p>0$.
Let $\pp$ be a singular prime that is ramified over $F_1$, i.e., $\ka(\pp)=\ka(\pp_1)$.
Then 
\[ \de(\pp) \geq p \, \de(\pp_1) + \tfrac{p-1}2 \cdot \deg(\pp). \]
\end{lem}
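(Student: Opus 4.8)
\emph{Approach.} The plan is to run exactly the differential computation that drives Section~\ref{2023_04_10_16:30}, namely \cite[Theorem~2.3]{BedSt87}, but to read off an inequality instead of an exact value. First I fix the smallest $m$ for which the restricted prime $\pp_m$ is rational; such an $m$ exists by \cite[Theorem~2.21]{HiSt22}. Let $t\in F$ be a local parameter at $\pp$. Since $t^{p^m}\in F^{p^m}\subseteq F_m$, the function $t^{p^m}$ lives on the rational level, and because $\pp$ is ramified over $F_1$ the formula of \cite[Theorem~2.3]{BedSt87} takes the form $\de(\pp)=p\,\de(\pp_1)+\tfrac12 v_{\pp_m}(dt^{p^m})$, precisely as in the ramified subcases of the proofs of Theorems~\ref{2023_05_26_02:00} and~\ref{2023_05_18_23:10} (where $m=2$ and $m=3$, respectively). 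The lemma is thus equivalent to the purely local estimate $v_{\pp_m}(dt^{p^m})\geq(p-1)\deg(\pp)$.

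\emph{Local setup.} Next I record the numerology of the tower $\pp\mid\pp_1\mid\cdots\mid\pp_m$. Writing $e_j$ and $f_j=p/e_j$ for the ramification index and residue degree of $\pp_j\mid\pp_{j+1}$, one has $\deg(\pp)=\prod_j f_j=p^m/E$ with $E=e(\pp\mid\pp_m)=\prod_j e_j$, hence $\nu:=\mathrm{ord}_{\pp_m}(t^{p^m})=p^m/E=\deg(\pp)$. Minimality of $m$ forces $\deg(\pp_{m-1})=p$, so the last step $\pp_{m-1}\mid\pp_m$ is inertial; since $e_0=p$ by the ramification hypothesis, the number $r$ of ramified steps satisfies $1\leq r\leq m-1$, so $\nu=p^{m-r}$ is a positive power of $p$ and in particular $p\mid\nu$. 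I then expand at the rational prime: choose a uniformizer $s$ at $\pp_m$, so $\widehat{F_m}_{\pp_m}=K[[s]]$, and write $F_{m-1}=F_m(\theta)$ with $\theta^p=\psi(s)\in K[[s]]$, where $\psi(0)\notin K^p$ because $\pp_{m-1}\mid\pp_m$ is inertial and inseparable. Expanding $g:=t^{p^{m-1}}\in F_{m-1}$ in the basis $1,\theta,\dots,\theta^{p-1}$ as $g=\sum_{i=0}^{p-1}A_i(s)\theta^i$ with $A_i\in K((s))$ gives $t^{p^m}=g^p=\sum_i A_i^p\psi^i$ and therefore $dt^{p^m}=\psi'(s)\,ds\cdot\sum_{i=1}^{p-1} i\,A_i^p\psi^{i-1}$.

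\emph{The estimate.} The crux is to bound the order of this differential. Because $\psi(0)\notin K^p$, the powers $1,\psi(0),\dots,\psi(0)^{p-1}$ are linearly independent over $K^p$, so no cancellation occurs among the leading terms of the summands $A_i^p\psi^i$; this yields $\nu=p\min_i\mathrm{ord}(A_i)$ as well as $\mathrm{ord}\big(\sum_{i\geq1}i\,A_i^p\psi^{i-1}\big)=p\min_{1\leq i\leq p-1}\mathrm{ord}(A_i)$, which combined with $v_{\pp_m}(\psi')$ produces a lower bound for $v_{\pp_m}(dt^{p^m})$. In characteristic $p=2$ this is immediate: there $(p-1)\nu=\nu$, only the term $A_1^2$ survives differentiation, and it already has order $\geq\nu$, so $v_{\pp_m}(dt^{4})\geq\nu=\deg(\pp)$ and the lemma follows.

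\emph{The main obstacle.} For general $p$ the full target $(p-1)\nu$ requires controlling $v_{\pp_m}(\psi')$, and this is where the real work lies. I expect it to demand either an induction on the height $m$ of the tower—peeling off one Frobenius at a time and feeding the bound for $\pp_1\mid\pp_m$ back into the estimate—or a simultaneous analysis of all the generators $\theta_1,\dots,\theta_m$ of the purely inseparable tower, exploiting that by hypothesis the ramification is concentrated at the first step $\pp\mid\pp_1$. The difficulty is bookkeeping rather than conceptual, and the equality cases in Theorems~\ref{2023_05_26_02:00}\ref{2023_05_26_02:03} and~\ref{2024_04_16_15:20} suggest the bound $(p-1)\nu$ is sharp.
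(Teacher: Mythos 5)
Your strategy is the same as the paper's (apply \cite[Theorem~2.3]{BedSt87} at a rational restriction and bound the order of a differential there), and your numerology for the tower is right; but the proposal contains one fatal error and one missing reduction.

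The fatal error is the coefficient in the formula you quote from \cite[Theorem~2.3]{BedSt87}: the local term carries the factor $\tfrac{p-1}{2}$, not $\tfrac{1}{2}$, i.e.\ in the ramified case $\de(\pp) = p\,\de(\pp_1) + \tfrac{p-1}{2}\, v_{\pp_m}(dt^{p^m})$. The two readings agree in characteristic $2$, which is why the ramified subcases in the proofs of Theorems~\ref{2023_05_26_02:00} and~\ref{2023_05_18_23:10} display a bare $\tfrac12$; but already the characteristic-$3$ normal form of Theorem~\ref{2024_04_16_15:20} forces the factor $\tfrac{p-1}{2}$ (with $\tfrac12$ the quasi-elliptic prime would have $\de(\pp)=\tfrac12$, not an integer). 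With the correct factor, the lemma needs only the weak estimate $v_{\pp_m}(dt^{p^m}) \geq \deg(\pp)$, which is immediate: $v_{\pp_m}(t^{p^m}) = \deg(\pp)$ is divisible by $p$, so differentiation kills the leading term --- this is exactly the paper's one-line argument. Your target $v_{\pp_m}(dt^{p^m}) \geq (p-1)\deg(\pp)$ is not just unnecessary, it is false for $p\geq 3$: take $p=3$, $F = K(x,z)$ with $z^9 = a_0 + x + a_3 x^3$, $a_3 \notin K^3$, and $\pp$ the pole of $x$. Then $\ka(\pp) = \ka(\pp_1) = K(a_3^{1/3})$, so $\pp$ is ramified over $F_1$, $\deg(\pp) = 3$, $\de(\pp_1) = 1$ (here $F_1|K$ is the quasi-elliptic field of Theorem~\ref{2024_04_16_15:20}), and $\de(\pp) = 7$ (compute the genus of $F|K$ by Riemann--Hurwitz for the separable cubic $F|K(z)$: the different is concentrated at $\pp$ with exponent $6$, so $g = 7$, while $\ov g = 0$ and $\pp$ is the only singular prime). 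Yet for the local parameter $t = z^{-1}$ one has, with $\cx = x^{-1}$,
\[ t^9 = a_3^{-1}\cx^3 - a_3^{-2}\cx^5 - a_0 a_3^{-2}\cx^6 + a_3^{-3}\cx^7 + \cdots, \qquad dt^9 = \bigl(a_3^{-2}\cx^4 + a_3^{-3}\cx^6+\cdots\bigr)\,d\cx, \]
so $v_{\pp_2}(dt^9) = 4 < 6 = (p-1)\deg(\pp)$. Note that $7 = 3\cdot 1 + \tfrac{3-1}{2}\cdot 4$, confirming the normalization, and that the lemma's bound $7 \geq 6$ holds. Consequently the ``main obstacle'' of your last paragraph cannot be overcome by any bookkeeping or induction: the inequality you reduced the lemma to is simply false.

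The missing reduction: a rational restriction $\pp_m$ need not exist. By \cite[Corollary~2.14]{HiSt22} its existence is equivalent to $\pp$ being non-decomposed, i.e.\ to $\ka(\pp)|K$ being purely inseparable, and singular primes can very well be decomposed. The paper first proves the bound for non-decomposed $\pp$ and then deduces the general case by passing to a prime $\fq$ of $K^{sep}F|K^{sep}$ above $\pp$ and invoking \cite[Proposition~2.10]{HiSt22}, which gives $\de(\pp) = r\,\de(\fq)$, $\de(\pp_1) = r\,\de(\fq_1)$, $\deg(\pp) = r\,\deg(\fq)$. Without this step your argument covers only part of the statement. (Your characteristic-$2$ computation is correct as far as it goes, but once the coefficient is fixed, the expansion machinery with $\theta$, $A_i$, $\psi$ is not needed at all.)
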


Note that as $\ka(\pp)=\ka(\pp_1)$ we can replace $\deg(\pp)$ with $\deg(\pp_1)$ in the above inequality.

\begin{proof}
Assume first that $\pp$ is non-decomposed, i.e., the extension $\ka(\pp)|K$ is purely inseparable, i.e., some restriction $\pp_n$ is rational \cite[Corollary~2.17]{HiSt22}.
Then $\pp$ is non-rational and $\deg(\pp)>1$ is a $p$-power.
Choose an integer $n>1$ such that $\pp_n$ is rational and let $z\in F$ be a local parameter at $\pp$.
Since the function $z^{p^n} \in F_n$ has order $p^n/e_0\cdots e_{n-1} = \deg(\pp)$ at $\pp_n$, where $e_i$ denotes the ramification index of the extension $\pp_i|\pp_{i+1}$,
the order of the differential $dz^{p^n}$ of $F_n|K$ at $\pp_n$ is at least $\deg(\pp)$. Thus the claim follows from \cite[Theorem~2.3]{BedSt87}.

Now we drop the non-decomposedness assumption on $\pp$.
Let $\fq$ be a (non-decomposed) prime of the extended function field $K^{sep} F | K^{sep}$ that lies over $\pp$, where $K^{sep}$ denotes the separable closure of $K$.
By \cite[Proposition~2.12]{HiSt22} we have $\de(\pp) = r \de(\fq)$, $\de(\pp_1) = r \de(\fq_1)$ and  $\deg(\pp) = r \deg(\fq)$, where $r$ is the separability degree of the extension $\ka(\pp)|K$.
\end{proof}

\begin{rem}\label{2024_04_20_15:30}
A function field $F|K$ admitting a singular prime $\pp$ whose degree is a power of 2 can only exist in characteristic $p=2$.
Indeed, as the residue field extension $\ka(\pp)|K$ is inseparable (see e.g.~\cite[Proposition~2.9]{HiSt22}), its degree $\deg(\pp)$ must be a multiple of the characteristic $p$.
In particular, the assumption that $p=2$ in Theorem~\ref{2023_05_18_23:10} can be removed.
\end{rem}

\begin{rem}\label{2023_11_07_14:00}
We draw some consequences from the proof of the theorem.
For a function field $F|K = K(x,y)|K$ as in Theorem~\ref{2023_05_18_23:10}~\ref{2024_04_12_11:10} we have $e=e_1=1$ and
$ \divv_\infty(x) = \pp, $
where $e$ and $e_1$ denote the ramification indices of $\pp$ and $\pp_1$ over $F_1$ and $F_2$ respectively.
In addition the Frobenius pullback
$F_1|K = K(x,y^2)|K$
is quasi-elliptic, actually given as in Theorem~\ref{2023_05_26_02:00}~\ref{2023_05_26_02:01},
and moreover
\begin{align*}
    H^0(\pp_2) &= H^0(\pp_1) = K \oplus Kx, \quad
    H^0(\pp) = K \oplus Kx \oplus Ky.
\end{align*}
Similarly, for a function field $F|K=K(x,z,y)|K$ as in Theorem~\ref{2023_05_18_23:10}~\ref{2024_04_12_11:15}, it is clear that
$e=1$, $e_1=2$, $e_2 = 1$, and
$ \divv_\infty(x) = \pp^{2} $,
where $e$, $e_1$ and $e_2$ are the ramification indices of $\pp$, $\pp_1$ and $\pp_2$ over $F_1$, $F_2$ and $F_3$ respectively.
Furthermore, the quasi-elliptic Frobenius pullback $F_1|K = K(x,z)|K$ satisfies Theorem~\ref{2023_05_26_02:00}~\ref{2023_05_26_02:03}, and 
\begin{align*}
    H^0(\pp_3) &= K \oplus Kx, \quad
    H^0(\pp_1) = K \oplus Kz, \quad
    H^0(\pp) = K \oplus Kz \oplus Ky. 
\end{align*}
\end{rem}

By proceeding as in the proof of Proposition~\ref{2024_08_10_15:40}, we can use the above Riemann-Roch spaces together with the incidence properties of the functions $x,z,y$ to determine the isomorphism classes of the function fields in Theorem~\ref{2023_05_18_23:10}.

\begin{prop}\label{2024_08_11_01:30}
$ $
\begin{enumerate}[\upshape (i)]
    \item \label{2024_08_11_01:31}
    Let $F|K$ and $F'|K$ be two function fields of type~\ref{2024_04_12_11:10} in Theorem~\ref{2023_05_18_23:10}.
    Then the $K$-isomorphisms $F'\overset\sim\to F$ are given by the transformations
    \[ (x',y') \mapsto ( \eps^4 ( x + \al ), \eps (y + \ga x + \beta) ), \]
    where $\eps,\al,\beta,\ga \in K$ are constants such that 
    $\eps\neq 0$,
    $\eps^{4} a_2' = a_2$, 
    $\eps^{12} a_4' = a_4 + \ga^4$,
    $\eps^{-4} a_0' = a_0 + \al^2 a_2 + \al^4 a_4 + \beta^4 + \ga^4 \al^4 + \al$.
    The automorphisms of $F|K$ are given by
    \[ (x,y) \mapsto ( \eps ( x + \al ), \eps (y + \beta) ), \]
    where $\eps,\al,\beta \in K$ satisfy
    $\eps^3 = 1$,
    $(\eps + 1) a_2 = 0$,
    $\beta^4 = (\eps^{2} + 1) a_0 + \al^2 a_2 + \al^4 a_4 + \al$.
    
    \item \label{2024_08_11_01:32}
    Let $F|K$ and $F'|K$ be two function fields of type~\ref{2024_04_12_11:15} in Theorem~\ref{2023_05_18_23:10}.
    Then the $K$-isomorphisms $F'\overset\sim\to F$ are given by the transformations
    \[ (x',z',y') \mapsto ( \eps^8 ( x + \al ), \eps^2 (z + \beta), \eps (y + \tau + \ga z) ), \]
    where $\eps,\al,\beta,\tau,\ga \in K$ are constants such that
    $\eps\neq 0$,
    $\eps^8 a_2' = a_2$,
    $\eps^6 c_1' =  c_1$,
    $\eps^2 c_2' = c_2 + \ga^2$,
    $\eps^{-8} a_0' = a_0 + \al^2 a_2 + \al + \beta^4$,
    $\eps^{-2} c_0' = c_0 + \al c_1 + \beta^2 (c_2 + \ga^2) + \beta + \tau^2$.
    In particular the quotient $\iota = c_1^4/a_2^3$ is an invariant of the function field $F|K$.
    The automorphisms of $F|K$ are given by
    \[ (x,z,y) \mapsto (x + \al, z + \beta, y + \tau), \]
    where $\al,\beta,\tau \in K$ satisfy
    $\beta^4 = \al + \al^2 a_2 $,
    $\tau^2 = \al c_1 + \beta + \beta^2 c_2 $.
\end{enumerate}
\end{prop}

\section{Non-hyperelliptic regular non-smooth curves}
\label{2023_05_24_02:00}

Our next objective is to obtain the projective regular models of the function fields in the preceding section.
We are particularly interested in those function fields which are non-hyperelliptic.
Recall that the \textit{canonical field} of a one-dimensional function field $F|K$ is the subfield generated by the quotients of the non-zero holomorphic differentials of $F|K$.
Equivalently, the canonical field of $F|K$ is the subfield generated over $K$ by the global sections of any canonical effective divisor.
A function field $F|K$ of genus $g\geq2$ is called \textit{hyperelliptic} if it admits a quadratic subfield of genus zero;
this subfield is equal to the canonical field and so it is uniquely determined.

\begin{prop}\label{2024_04_16_19:10}
Every function field $F|K$ in Theorem~\ref{2023_05_18_23:10}~\ref{2024_04_12_11:10} is non-hyperelliptic.
\end{prop}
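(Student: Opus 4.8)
The plan is to identify the canonical field of $F|K$ explicitly and then invoke the characterization of hyperelliptic fields recalled immediately before the proposition. Since $\pp$ is a canonical divisor and is effective (it is a prime), the canonical field of $F|K$ is, by that definition, the subfield of $F$ generated over $K$ by the global sections in $H^0(\pp)$. So the whole question reduces to computing $H^0(\pp)$, which has already been done.

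First I would recall from Remark~\ref{2023_11_07_14:00} that for a function field as in Theorem~\ref{2023_05_18_23:10}~\ref{2024_04_12_11:10} one has $H^0(\pp) = K \oplus Kx \oplus Ky$. In particular both $x$ and $y$ lie in $H^0(\pp)$, so the canonical field contains $K(x,y)$. As $F = K(x,y)$ by construction, I conclude that the canonical field is all of $F$.

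Then I would finish by contradiction. If $F|K$ were hyperelliptic, then by the recalled definition its canonical field would be a quadratic subfield of genus zero, hence a \emph{proper} subfield of index $2$. This is incompatible with the canonical field being $F$ itself, which has genus $g=3$. Therefore $F|K$ is non-hyperelliptic.

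There is essentially no computational obstacle here: the argument rests entirely on the already-established Riemann-Roch space $H^0(\pp) = K \oplus Kx \oplus Ky$ together with the elementary fact that $x$ and $y$ generate $F$ over $K$. The only point deserving a word of care is to confirm that the sections furnished by $H^0(\pp)$ generate $F$ and not merely a subfield, but this is immediate from $F = K(x,y)$. As a geometric reformulation one could instead observe that the canonical map $[1:x:y]$ is birational onto its plane image, which is precisely the statement that the canonical field equals $F$; I would nonetheless favour the field-theoretic phrasing, since it matches the definitions set up in the text and avoids any appeal to the embedding (that will be carried out later in Section~\ref{2023_05_24_02:00}).
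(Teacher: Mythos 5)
Your proposal is correct and follows essentially the same route as the paper: the paper's proof is precisely the observation that the canonical field is generated over $K$ by $H^0(\pp) = K \oplus Kx \oplus Ky$ (citing Remark~\ref{2023_11_07_14:00}), which forces it to equal $F = K(x,y)$ and hence rules out the hyperelliptic case, where the canonical field would be a proper quadratic genus-zero subfield. You have merely made explicit the final contradiction that the paper leaves implicit.
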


\begin{proof}
The canonical field is generated over $K$ by $H^0(\pp) = K \oplus Kx \oplus Ky$, as follows from Remark~\ref{2023_11_07_14:00}.
\end{proof}

\begin{prop}\label{2022_03_08_13:10}
Let $F|K$ be a function field as in Theorem~\ref{2023_05_18_23:10}~\ref{2024_04_12_11:15}. Then its canonical field is equal to $K(z,y)$.
The function field $F|K$ is non-hyperelliptic if and only if $c_1 \neq 0$, i.e., if and only if the invariant $\iota = c_1^4/a_2^3$ does not vanish.
\end{prop}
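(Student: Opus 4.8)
The plan is to pin down the canonical field explicitly and then, in each of the two cases $\mB\neq 0$ and $\mB=0$, to decide whether it is all of $F$. By Remark~\ref{2023_11_07_14:00} the unique singular prime satisfies $H^0(\pp)=K\oplus K\wO\oplus Ky$, and by Theorem~\ref{2023_05_18_23:10} the divisor $\pp$ is canonical; since the canonical field is generated over $K$ by the global sections of a canonical divisor, it equals $K(1,\wO,y)=K(\wO,y)$. This settles the first assertion. For the dichotomy I would invoke the definition of hyperelliptic directly: as $F$ has genus $g=3>0$, a hyperelliptic $F$ would have a genus-zero quadratic subfield as its canonical field, so $F$ is non-hyperelliptic as soon as $K(\wO,y)=F$, i.e. as soon as $x\in K(\wO,y)$; conversely, exhibiting $K(\wO,y)$ as a genus-zero quadratic subfield forces $F$ to be hyperelliptic by definition. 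Everything thus reduces to deciding whether $x\in K(\wO,y)$.

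The case $\mB\neq 0$ is immediate. Solving $y^2=\mA+\mB x+\wO+\nA\wO^2$ for $x$ gives $x=\mB^{-1}(y^2+\mA+\wO+\nA\wO^2)\in K(\wO,y)$, so $F=K(x,\wO,y)=K(\wO,y)$ equals its own canonical field and hence is non-hyperelliptic. Since $\aC\notin K^2$ forces $\aC\neq 0$, the invariant $\mB^4/\aC^3$ of Proposition~\ref{2023_09_10_17:40} is nonzero exactly when $\mB\neq 0$, which matches the wording of the statement.

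The case $\mB=0$ is the heart of the matter, and here I would work over the subfield $K(\wO)$. When $\mB=0$ the relation reads $y^2=\mA+\wO+\nA\wO^2\in K(\wO)$, and a degree comparison (the linear term $\wO$ cannot be produced by a square in characteristic two) shows $y\notin K(\wO)$; thus $K(\wO,y)\,|\,K(\wO)$ is purely inseparable of degree two. On the other hand $x$ satisfies $\aC x^2+x+(\aA+\wO^4)=0$ over $K(\wO)$, which is irreducible (its leading coefficient $\aC$ is a non-square, so no root lies in $K(\wO)$) and separable (its $x$-derivative equals $1$); hence $K(x,\wO)\,|\,K(\wO)$ is separable of degree two. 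Being of opposite types, the two quadratic extensions are distinct, so their compositum $F=K(x,\wO,y)$ has degree four over $K(\wO)$ and therefore $[F:K(\wO,y)]=2$. In particular $x\notin K(\wO,y)$, and the canonical field is a proper quadratic subfield.

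It then remains to check that $K(\wO,y)$ has genus zero. The cleanest route is to read $y^2=\nA\wO^2+\wO+\mA$ as the affine chart of the projective plane conic $G=Y^2+\nA Z^2+ZW+\mA W^2$; in characteristic two its gradient $(G_Y,G_Z,G_W)=(0,W,Z)$ vanishes only at $(1:0:0)$, which is not on the conic because $G(1,0,0)=1\neq 0$, so by the Jacobian criterion the conic is smooth over $K$, hence geometrically integral of genus zero, and its function field is $K(\wO,y)$. Thus $K(\wO,y)$ is a quadratic subfield of genus zero and $F$ is hyperelliptic. I expect the genuine obstacle to be precisely this case $\mB=0$, where one must simultaneously control $[F:K(\wO,y)]$ through the separable/inseparable contrast over $K(\wO)$ and the genus of the subfield through the smoothness of the conic, taking care that the imperfection of $K$ entering via $\nA,\aC\notin K^2$ does not create a singular prime. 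As an alternative to the conic one could compute $H^0(\pp^\infty)$ directly: $\wO$ has its only pole at the inertial degree-two prime $\pp^\infty$ over $\wO=\infty$, the functions $1,\wO,y$ all lie in $H^0(\pp^\infty)$, and $\dim H^0(\pp^\infty)=3=\deg(\pp^\infty)+1$ forces the genus to vanish by Riemann--Roch.
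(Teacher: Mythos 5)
Your proof is correct and follows essentially the same route as the paper: identify the canonical field from $H^0(\pp)=K\oplus K\wO\oplus Ky$, solve for $x$ when $\mB\neq 0$, and when $\mB=0$ exhibit $K(\wO,y)$ as a genus-zero conic field over which $x$ satisfies the quadratic $\aC x^2+x+\aA+\wO^4=0$. The only differences are in bookkeeping: where the paper deduces that $K(\wO,y)$ is a proper subfield from the genus gap ($0$ versus $3$), you use linear disjointness of the separable and purely inseparable quadratic extensions of $K(\wO)$, and you verify the genus-zero claim by checking smoothness of the projective conic rather than just citing the quadratic relation.
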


In particular, there are infinitely many isomorphism classes of non-hyperelliptic function fields in Theorem~\ref{2023_05_18_23:10}.

\begin{proof}
The first assertion follows from
$H^0(\pp) = K \oplus Kz \oplus Ky$.
To prove the second assertion we first assume that $c_1=0$.
Then the canonical field $H:=K(z,y)$ has genus zero because its generators satisfy the quadratic equation
\[ y^2 + c_2 z^2 + z + c_0 = 0. \]
In particular, $H$ is a proper subfield of $F = K(x,z,y) = H(x)$.
The element $x$ satisfies over the canonical field $H$ the quadratic 
equation
\[ a_2 x^2 + x + a_0 + z^4 = 0. \]
Now we assume that $c_1\neq 0$.
Then the canonical field $K(z,y)$ coincides with $F$ because it contains the function $x$.
\end{proof}

If a function field $F|K$ of genus $g=3$ is non-hyperelliptic then the canonical linear system of divisors realizes the regular proper model of $F|K$ as a (canonical) quartic curve in $\PP^{g-1}(K) = \PP^2(K)$.
For a non-hyperelliptic function field as in Theorem~\ref{2023_05_18_23:10}, the complete linear system $|\pp|$ provides such a realization.

\begin{thm}\label{2023_04_10_16:50}
The one-dimensional separable non-hyperelliptic geometrically rational function fields $F|K$ of genus $g=3$ admitting a 
singular prime $\pp$ that is a canonical divisor have characteristic $p=2$ and are classified as follows:
\begin{enumerate}[\upshape (i)]
   \item \label{2023_05_19_18:01}
   If $\pp_2$ is rational then $F|K$ is the function field of a regular plane projective integral quartic curve over $\Spec K$ with generic point $(x:y:1)$ that satisfies the quartic equation
\[ y^4 + a + x + b x^2 + c x^4 = 0, \]
where $a,b,c\in K$ are constants satisfying $c\notin K^2$.
The singular prime $\pp$ is centered at the point $(1 : c^{1/4} : 0)$ in $\PP^2(\ov K)$ and has residue fields $\ka(\pp) = K(c^{1/4})$, $\ka(\pp_1) = K(c^{1/2})$, $\ka(\pp_2) = K$.
   \item \label{2023_05_19_18:02}
   If $\pp_2$ is non-rational then $F|K$ is the function field of a regular plane projective integral quartic curve over $\Spec K$ with generic point $(1:y:z)$ that satisfies the quartic equation
\[ y^4 + a z^4 + b y^2 + c z^2 + b z + d = 0, \]
where $a,b,c,d \in K$ are constants satisfying $a \notin K^2$ and $b \neq 0$.
The singular prime $\pp$ is centered at the point $(0 : a^{1/4} : 1)$ in $\PP^2(\ov K)$ and has residue fields $\ka(\pp) = K(a^{1/4})$, $\ka({\pp_1}) = \ka({\pp_2}) = K(a^{1/2})$, $\ka(\pp_3) = K$.
Moreover, the polynomial expression $\iota = ab^2 + c^2 + 1$ is an invariant of the function field $F|K$.
\end{enumerate}
\end{thm}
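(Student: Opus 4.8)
The plan is to obtain the plane quartic models directly from the normal forms in Theorem~\ref{2023_05_18_23:10} by passing to the canonical embedding via the complete linear system $|\pp|$, since in each case $\pp$ is a canonical divisor of degree $4$ on a non-hyperelliptic genus-$3$ function field. Concretely, I would first recall from Remark~\ref{2023_11_07_14:00} that $H^0(\pp) = K \oplus Kx \oplus Ky$ in case~\ref{2024_04_12_11:10} and $H^0(\pp) = K \oplus K\wO \oplus Ky$ in case~\ref{2024_04_12_11:15}. These three global sections furnish a basis, so the canonical map sends the generic point to $(x:y:1)$ and $(1:y:\wO)$ respectively in $\PP^2(K)$, and the regular proper model is the closure of the image. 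The task then reduces to finding the single degree-$4$ relation satisfied by the coordinate functions.

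For item~\ref{2023_05_19_18:01}, starting from $y^4 = a_0 + x + a_2 x^2 + a_4 x^4$ (recall $z = y^2$), I would simply rewrite this as the homogeneous quartic $y^4 + a_0 + x + a_2 x^2 + a_4 x^4 = 0$, relabelling the constants as $\aAA = a_0$, $\aCC = a_2$, $\aEE = a_4$, with $\aEE \notin K^2$ inherited from $a_4 \notin K^2$. For item~\ref{2023_05_19_18:02}, the two defining equations $\wO^4 = \aA + x + \aC x^2$ and $y^2 = \mA + \mB x + \wO + \nA \wO^2$ must be combined to eliminate $x$: from the first equation $x$ is a root of $\aC x^2 + x + (\aA + \wO^4) = 0$, while from the second $\mB x = y^2 + \mA + \wO + \nA \wO^2$. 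Substituting $x = \mB^{-1}(y^2 + \mA + \wO + \nA \wO^2)$ into the quadratic in $x$ (using $\mB \neq 0$, which holds by the non-hyperelliptic assumption via Proposition~\ref{2022_03_08_13:10}) yields a single relation among $\wO$ and $y$. After squaring appropriately to clear the characteristic-$2$ cross terms and rescaling the constants, this should collapse to $y^4 + \aT \wO^4 + \m y^2 + \n \wO^2 + \m \wO + \cT = 0$, where $\aT$ corresponds to $\aC$ (hence $\aT \notin K^2$) and $\m$ to $\mB$ (hence $\m \neq 0$). The residue field data and the centers of $\pp$ in $\PP^2(\ov K)$ follow by reading off the point at infinity along $x$ from Theorem~\ref{2023_05_18_23:10}, giving $(1:\aEE^{1/4}:0)$ and $(0:\aT^{1/4}:1)$.

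The main obstacle I anticipate is the elimination in item~\ref{2023_05_19_18:02}: one must be careful that the substitution produces a genuine quartic (degree $4$ in the homogeneous coordinates) rather than something of higher degree, and that all the terms of odd degree in $x$ cancel correctly in characteristic $2$ so as to leave exactly the stated monomials. I would verify this by checking degrees directly—the term $\aT\wO^4$ coming from $\aC x^2$ after clearing, and the linear-in-$\wO$ term $\m\wO$ surviving from the coupling between the two equations—and by confirming that the resulting curve is indeed the canonical model, i.e.\ that the relation has degree $2g - 2 = 4$ and is irreducible. Regularity of the plane model away from $\pp$ follows from the Jacobian criterion exactly as in the proof of Theorem~\ref{2023_05_18_23:10}, and the singularity at $\pp$ is already controlled by $\de(\pp) = 3$.

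Finally, the invariance claim that $\aT\m^2 + \n^2 = ab^2 + c^2$ is an invariant of $F|K$ follows from Proposition~\ref{2023_09_10_17:40}~\ref{2024_04_12_20:10} by tracking how $\aC$, $\mB$, $\nA$ transform under the admissible isomorphisms and checking that the polynomial expression $\aC\mB^2 + \nA^2$ is preserved; translating into the plane-model constants $\aT,\m,\n$ via the identifications above gives the stated invariant $ab^2 + c^2$. I would present this as a short computation using the transformation rules $\eps^8\aC' = \aC$, $\eps^6\mB' = \mB$, $\eps^2\nA' = \nA + \tau^2$, verifying that the combination is unchanged once one accounts for the characteristic-$2$ identity $(\nA + \tau^2)^2 = \nA^2 + \tau^4$.
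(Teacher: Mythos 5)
Your strategy coincides with the paper's own proof: reduce to the normal forms of Theorem~\ref{2023_05_18_23:10}, relabel the constants in case~\ref{2024_04_12_11:10}, eliminate $x$ in case~\ref{2024_04_12_11:15} using the linear occurrence of $x$ in the equation for $y^2$, and deduce the invariance statement from Proposition~\ref{2023_09_10_17:40}~\ref{2024_04_12_20:10}. However, your treatment of item~\ref{2023_05_19_18:02} contains a genuine error. Carrying out the elimination (say with $\mA$ normalized to $0$; keeping $\mA$ only alters the constant term) yields
\[ \aC y^4 + (\mB^2 + \aC\nA^2)\,\wO^4 + \mB y^2 + (\aC + \mB\nA)\,\wO^2 + \mB\wO + \mB^2\aA = 0, \]
and to reach the stated monic form one divides by $\aC$, so that
\[ \aT = \aC^{-1}\mB^2 + \nA^2, \qquad \m = \aC^{-1}\mB, \qquad \n = 1 + \aC^{-1}\mB\nA, \qquad \cT = \aC^{-1}\mB^2\aA. \]
Thus your claim that ``$\aT$ corresponds to $\aC$ (hence $\aT\notin K^2$)'' is false: $\aT$ is a nontrivial combination of $\aC,\mB,\nA$, and the crucial condition $\aT\notin K^2$ does not come for free. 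It needs an argument: if $\aT=t^2$ with $t\in K$, then $\aC^{-1}\mB^2=(t+\nA)^2$, so $\aC=\bigl(\mB/(t+\nA)\bigr)^2\in K^2$, contradicting $\aC\notin K^2$. The paper in fact proves the stronger statement that $(\aC,\mB,\aA,\nA)\mapsto(\aT,\m,\n,\cT)$ is a bijection of $(K\setminus K^2)\times K^*\times K\times K$ onto itself, which is what makes the list a genuine classification (every quartic of the stated form occurs) rather than a mere containment. The same mis-identification would derail your last step: the invariant must be computed with the correct formulas, giving $\aT\m^2+\n^2=\aC^{-3}\mB^4+1$, which is invariant precisely because $\mB^4/\aC^3$ is, by Proposition~\ref{2023_09_10_17:40}~\ref{2024_04_12_20:10}; if $\aT$ were literally $\aC$ and $\m$ literally $\mB$, this computation would come out wrong.

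Two further points are missing. First, the theorem asserts that such function fields exist only in characteristic $p=2$; your proposal assumes this rather than proving it (the paper gets it from Remark~\ref{2024_04_20_15:30}: $\deg(\pp)=2g-2=4$ is a power of $2$, and $\deg(\pp)$ must be divisible by $p$ because $\ka(\pp)|K$ is inseparable). Second, before quoting Theorem~\ref{2023_05_18_23:10} you must verify its hypotheses $\de(\pp)=3$ and $\de(\pp_1)=1$; this is exactly where non-hyperellipticity enters, via the discussion at the beginning of Section~\ref{2023_04_10_16:30} (non-hyperelliptic forces $g_1=1$, so $F_1|K$ is quasi-elliptic, and Rosenlicht's genus drop formula then gives $\de(\pp)=3$, $\de(\pp_1)=1$). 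With these repairs your argument becomes the paper's proof.
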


\begin{proof}
The fact that $p=2$ follows from Remark~\ref{2024_04_20_15:30}, since $\deg(\pp)=2g-2=4$.
As remarked in the beginning of Section~\ref{2023_04_10_16:30}, the assumption that $F|K$ is non-hyperelliptic implies that $\de(\pp)=3$ and $\de(\pp_1)=1$.
Consequently, the function fields we want to classify are necessarily given as in Theorem~\ref{2023_05_18_23:10}.

(i) Conversely, each function field in Theorem~\ref{2023_05_18_23:10}~\ref{2024_04_12_11:10} is non-hyperelliptic by Proposition~\ref{2024_04_16_19:10}, and so it only remains to simplify the notation by setting 
\[ a:= a_0, \quad b:= a_2, \quad c:= a_4. \]

(ii)
As by Proposition~\ref{2022_03_08_13:10} a function field $F|K$ in Theorem~\ref{2023_05_18_23:10}~\ref{2024_04_12_11:15} is non-hyperelliptic if and only if $c_1 \neq 0$, we can normalize $c_0 = 0$ by replacing $x$ with $x + c_1^{-1} c_0$.
Eliminating the variable $x$ via the equation $y^2=c_1 x + z + c_2 z^2$ we obtain the relation
\[ a_2 y^4 + (c_1^2 + a_2 c_2^2) z^4 + c_1 y^2 + (a_2 + c_1 c_2) z^2 + c_1 z + c_1^2 a_0 = 0. \]
By setting
\[ 
a:= a_2^{-1} (c_1^2 + a_2 c_2^2),   \quad
b:= a_2^{-1} c_1 , \quad
c := a_2^{-1} (a_2 + c_1 c_2), \quad
d := a_2^{-1} c_1^2 a_0,
\]
we get a bijection $(a_2,c_1,a_0,c_2)\mapsto(a,b,c,d)$ from the product set $(K\setminus K^2)\times K^* \times K\times K$ onto itself, 
whose inverse is given by
\[ a_2 = b^{-4} \iota, \quad c_1 = b^{-3} \iota, \quad a_0 = b^2 d \, \iota^{-1}, \quad c_2 = b^{-1}(1+c), \]
where $\iota = ab^2 + c^2 + 1 = a_2^{-3} c_1^4$ is the invariant of $F|K$ (see Proposition~\ref{2024_08_11_01:30}).

In both cases the point in $\PP^2(\ov K)$ at which the singular prime $\pp$ is centered is obtained from the Jacobian criterion.
The assertions on the residue fields of $\pp$ follow from the description of the residue fields in Theorem~\ref{2023_05_18_23:10}.
\end{proof}

Recall that in view of Theorem~\ref{2023_05_18_23:10} the iterated Frobenius pullback $F_2|K = F^4{\cdot}K|K$ (resp. $F_3|K=F^8{\cdot}K|K$) is a rational function field if $F|K$ is of type~\ref{2023_05_19_18:01} (resp. type~\ref{2023_05_19_18:02}).

\begin{cor}\label{2024_06_19_15:30} $ $
\begin{enumerate}[\upshape (i)]
    \item \label{2024_06_19_15:31}
     If $F|K$ is of type~\ref{2023_05_19_18:01} then the second Frobenius pullback $F_2|K$ is the function field of the rational quartic curve over $\Spec K$ with the generic point $(x^4:y^4:1)$, which admits the parameterization $(x^4 : a + x + bx^2 + cx^4:1)$.
    
    \item \label{2024_06_19_15:32}
    If $F|K$ is of type~\ref{2023_05_19_18:02} then the third Frobenius pullback $F_3|K$ is the function field of the rational quartic curve over $\Spec K$ with the generic point $(1:y^8:z^8)$, which can be parameterized as follows
    \begin{align*}
        z^8 &= b^4 d^2 \iota^{-2} + x^2 + b^{-8} \iota^2 x^4, \\
        y^8 &= \iota^{-2} d (b^2 \iota + d + c^4 d) + x + b^{-4} (\iota +1 + c^4) x^2 + b^{-8} a^2 \iota^2 x^4.
    \end{align*}
\end{enumerate}
\end{cor}

The isomorphism classes are readily obtained from items~\ref{2024_08_11_01:31} and~\ref{2024_08_11_01:32} in Proposition~\ref{2024_08_11_01:30}.
In item~\ref{2024_08_11_01:32} the conditions have to be rephrased in terms of the constants $a,b,c,d$.
In item~\ref{2024_08_11_01:31} no change is necessary (just write $a,b,c$ for $a_0,a_2,a_4$), so we do not repeat it.



\begin{prop}\label{2023_06_05_16:35}
$ $
\begin{enumerate}[\upshape (i)]
    \setcounter{enumi}{1}
    \item
    
    Let $F|K=K(z,y)|K$ and $F'|K=K(z',y')|K$ be two function fields of type~\ref{2023_05_19_18:02} in Theorem~\ref{2023_04_10_16:50}.
    Then the $K$-isomorphisms $F' \overset \sim \to F$ are given by
    \[ (z',y') \mapsto ( \eps^2 (z + \beta), \eps (y + \tau + \ga z) ), \]
    where $\eps,\beta,\tau,\ga \in K$ satisfy 
    $\eps \neq 0$, 
    $\eps^4 a' = a + \ga^4$,
    $\eps^{-2} b' = b$,
    $c' = c + \ga^2 b$,
    $\eps^{-4} d' = d + \beta^4 (\ga^4 + a) + \beta^2 c + \tau^4 + ( \beta^2 \ga^2 + \beta + \tau^2) b$. 
    The automorphisms of $F|K$ are given by
    \[ (z,y) \mapsto (z + \beta, y + \tau), \]
    where $\beta,\tau \in K$ satisfy 
    $\beta^4 a + \beta^2 c + \tau^4 + ( \beta + \tau^2) b = 0$.
\end{enumerate}
\end{prop}

The proposition shows again that $\iota = a b^2 + c^2 + 1$ is an invariant of the function field $F|K$ of type~\ref{2023_05_19_18:02}.

\begin{cor}\label{2024_07_01_18:25}
Let $F|K$ be a function field as in Theorem~\ref{2023_04_10_16:50}.
Then we can normalize
\begin{enumerate}[\upshape (i)]
    \item $a=0$ in item~\ref{2023_05_19_18:01},
    \item $d=0$ in item~\ref{2023_05_19_18:02},
\end{enumerate}
after extending the base field $K$ via a separable extension of degree $\leq 4$.
\end{cor}

Let $F|K$ be a function field as in Theorem~\ref{2023_04_10_16:50}.
Then it is the function field of a plane projective geometrically integral quartic curve $C$ over $\Spec K$, as described in the two items~\ref{2023_05_19_18:01} and~\ref{2023_05_19_18:02}.
The extended quartic curve $C\otimes_K \ov K$ in the projective plane $\PP^2(\ov K)$ is rational and has a unique singular point, which by the Jacobian criterion is equal to $(1:c^{1/4}:0)$ or $(0 : a^{1/4} : 1)$
respectively.
This is the point where the only singular prime $\pp$ is centered.
Its local ring $\OO_\pp \otimes_K \ov K$ has geometric singularity degree three and its tangent line does not intersect the quartic curve at any other point.

The quartic curve $C|K$ is the regular complete model of the function field $F|K$; more precisely, its closed points correspond bijectively to the primes of $F|K$, and its local rings are the local rings of the corresponding primes (which are discrete valuation rings, i.e., regular one-dimensional local rings).
The generic point of the curve $C|K$ is the only non-closed point, and its local ring is the function field $F|K$.

The extended quartic curve $C \otimes_K \ov K$ in the projective plane $\PP^2(\ov K)$ is \textit{strange}; indeed, its tangent lines have the common intersection point $(0:1:0)$.
If item~\ref{2023_05_19_18:01} occurs then
the tangent line at each non-singular point meets the quartic curve only at the tangency point;
in particular, every non-singular point is a non-ordinary inflection point.
By contrast, 
if item~\ref{2023_05_19_18:02} occurs then
each tangent line at a non-singular point is a bitangent,
and so the quartic curve has no inflection points.

\section{Universal fibrations by rational quartics in characteristic two}
\label{2023_09_21_20:25}

The function fields in Theorem~\ref{2023_04_10_16:50} give rise to fibrations by rational plane projective quartic curves in characteristic $2$, which we now investigate.
We start with the second item in the theorem.
Let $k$ be an algebraically closed ground field of characteristic $2$.
Consider the smooth integral fivefold
\begin{align*} 
    Z_2 &= \{\, ((x:y:z), (a,b,c,d)) \in \PP^2 \times \AAA^4 \mid 
    y^4 + a z^4 + b x^2 y^2 + c x^2 z^2 + b x^3 z + d x^4 = 0   \,\}
\end{align*}
inside $\PP^2 \times \AAA^4$.
The projection morphism
\[ \pi_2:Z_2 \tto \AAA^4 \]
is proper and flat, 
and it provides a $4$-dimensional family of plane projective quartic curves.

Let $B$ be a closed subvariety of $\AAA^4$, and let $a,b,c,d$ denote its affine coordinate functions.
By restricting the base of the fibration $\pi_2:Z_2\to \AAA^4$ to $B$ we obtain a closed subfibration
\[ T \tto B \]
with total space $T=\pi_2^{-1}(B)\subseteq Z \su \PP^2\times \AAA^4$.
Clearly $k(B)=k(a,b,c,d)$,
and then the closed subfibration $T \to B$ satisfies the two conditions $b\neq 0$ and $a \notin k(a^2,b^2,c^2,d^2)$ in Theorem~\ref{2023_04_10_16:50}~\ref{2023_05_19_18:02} if and only if $B$ is not contained in the hyperplane $\{b=0\}$ and the tangent spaces of $B$ are not all contained in the hyperplanes $\{ a = \text{constant}\}$.
Note that the first condition $b \neq 0$ is preserved under dominant base extensions, while the second condition $a \notin k(B)^2$ is preserved under dominant separable base extensions.
Assuming both conditions are satisfied, if we dehomogenize $x\mapsto 1$ then the field of rational functions on the total space $T$ becomes
\[ k(T) = k(B) (z,y), \,\,\, \text{where} \,\,\, y^4 + a z^4 + b y^2 + c z^2 + b z + d = 0, \]
and $a,b,c,d \in k(B)$, $a\notin k(B)^2$ and $b\neq 0$.
If we replace the ground field $k$ by the function field $k(B)$ of the base, then the higher dimensional function field $k(T)|k$ of the total space becomes a one-dimensional function field $k(T)|k(B)$, which is just the function field of the generic fibre of the fibration $T\to B$ and has been characterized in Theorem~\ref{2023_04_10_16:50}~\ref{2023_05_19_18:02}.
When the base $B$ is the line defined by the equations $b=a$, $c=1$ and $d=0$, we obtain the pencil of quartics that has been studied in detail in \cite[Section~3]{HiSt22}.

By Corollary~\ref{2024_06_19_15:30}~\ref{2024_06_19_15:32} there is a dominant map $\psi: \AAA^1 \times B \ttto T$ given by
\[ (\bar x,(a,b,c,d)) \mapsto ( (1 : \bar y : \bar z) , (a,b,c,d)). \]
where $\bar y$ and $\bar z$ stand for the corresponding polynomial expressions in $\bar x^{1/8}$ whose coefficients are rational expressions in $a^{1/8}$, $b^{1/4}$, $c^{1/4}$ and $d^{1/8}$.
However $\psi$ is not a rational map.
To repair this we consider the third \emph{Frobenius pushforward}
\[ B' = \{ (a,b,c,d) \in \AAA^4 \mid f(a^8,b^8,c^8,d^8) = 0 \text{ for each $f\in I(B)$} \} \]
of the variety $B\subset \AAA^4$, and take the dominant rational map
\begin{align*}
    \AAA^1 \times B' &\ttto T \\
    (\bar x,(a,b,c,d)) &\mapsto \psi (\bar x^8,(a^8,b^8,c^8,d^8)).
\end{align*}
This shows in a very explicit way that the total space $T$ is uniruled (see \cite[Proposition~4.2]{HiSt24} for a more general proof).
However, as the dominant rational map is inseparable, this does not assure that the Kodaira dimension of $T$ is $-\infty$ (see \cite{Shi74} and \cite[p.\,265]{Lied13}).

\begin{thm}\label{2023_05_22_23:55}
Let 
$\phi:T\to B$ be a proper dominant morphism of irreducible smooth algebraic varieties whose generic fibre is a non-hyperelliptic geometrically rational curve $C$ over $k(B)$ of genus $3$, that admits a non-smooth point $\pp$ with non-rational image $\pp_2\in C_2$ and such that $\pp$ is a canonical divisor.
Then the fibration $\phi:T\to B$ is, up to birational equivalence, a dominant base extension of a closed subfibration of $\pi_2:Z_2\to \AAA^4$.
\end{thm}

\begin{proof}
By Theorem~\ref{2023_04_10_16:50}~\ref{2023_05_19_18:02}, the function field $k(T)|k(B)$ of the generic fibre can be put into the following normal form
\[ k(T) = k(B) (z,y), \,\,\, \text{where} \,\,\, y^4 + a z^4 + b y^2 + c z^2 + b z + d = 0, \]
and $a,b,c,d \in k(B)$, $a\notin k(B)^2$ and $b\neq 0$.
By restricting eventually the base $B$ to an open subset, 
we may assume that $B$ is affine and that $a,b,c,d$ are regular functions on $B$. By applying the birational transformation
\[   P \mapstto \big( (1:y(P):z(P)),\phi(P) \big)  \]
we can assume that 
\[ T = \{ ((\bar x:\bar y:\bar z),w) \in \PP^2 \times B \mid \bar y^4 + a(w) \bar z^4 + b(w) \bar x^2 \bar y^2 + c (w) \bar x^2 \bar z^2 + b(w) \bar x^3 \bar z + d(w) \bar x^4 = 0 \} \]
and that $\phi:T \to B$ is the second projection morphism. 
The regular functions $a,b,c,d$ define a dominant morphism $B\to B'$, where $B'$ is a closed subvariety of $\AAA^4$ whose coordinate algebra $k[B']$ is isomorphic to $k[a,b,c,d]$.
By restricting the base of the fibration $\pi_2:Z_2\to \AAA^4$ to $B'$ we obtain a closed subfibration
\[ T'\tto B' \]
with the total space $T' = \pi^{-1}(B') \subseteq Z \subseteq \PP^2 \times \AAA^4$.
Now, making a base change by the dominant morphism $B \to B'$ we obtain 
$   T = T' \times_{B'} B $.
\end{proof}

We describe the fibres of $\pi_2:Z_2\to \AAA^4$. 
If $b=0$ then the fibre over a point $P:=(a,b,c,d)$ in the base $\AAA^4$ is non-reduced, since it is a double smooth quadric or a quadruple line.

Assume next that $b \neq 0$. 
If 
$c\neq b a^{1/2}$, i.e.,
$\iota := a b^2 + c^2 + 1 \neq 1$,
then we are in the generic case, where the fibre $\pi_2^{-1}(P)$ has the same properties as the extended curve $C \otimes_K \ov K$ described in the previous section.
In other words, the fibre is an integral rational plane projective quartic with a unique singular point of multiplicity $2$, whose tangent line does not meet the curve at any other point; 
it is strange as all its tangent lines have the common point $(0:1:0)$,
and furthermore it has no inflection points, since every tangent line at a non-singular point is a bitangent.

If $\iota = 1$ then the fibre $\pi_2^{-1}(P)$ has the same properties as the fibres in the generic case, the only difference being that the multiplicity of the singular point is equal to $3$.

Now we build a fibration out of the function fields in Theorem~\ref{2023_04_10_16:50}~\ref{2023_05_19_18:01}.
The variety
\begin{align*} 
    Z_1 &= \{\, ((x:y:z), (a , b , c )) \in \PP^2 \times \AAA^3 \mid 
    y^4 + a z^4 + x z^3 + b x^2 z^2 + c x^4 = 0   \,\}
\end{align*}
is a smooth integral fourfold whose projection
\[ \pi_1:Z_1 \tto \AAA^3 \]
is proper and flat.
If $b\neq0$ then the fibre over the point $(a,b,c)$ has the same properties as the geometric generic fibre described in the previous section. 
More precisely, in this case the fibre is an integral plane projective rational quartic curve with a unique singular point of multiplicity $2$.
Its non-singular points are non-ordinary inflection points and every tangent line cuts the curve exactly at one point.
In addition, all such tangent lines pass through the point $(0:1:0)$ and so the quartic curve is strange.

If $b=0$ then the fibre has the same properties as those in the generic case $b\neq0$, with just one difference: the only singular point of the quartic curve has multiplicity $3$.

\begin{rem*}
If by homogenizing we enlarge the base of $\pi_1$ from $\AAA^3$ to $\PP^3$ then the total space remains smooth and the resulting fibration stays proper and flat. 
Over each point of $\PP^3 \setminus \AAA^3$ the fibre degenerates to a non-reduced quartic curve, which can be either a quadruple line or the union of two double lines.
\end{rem*}

Arguing as in the proof of Theorem~\ref{2023_05_22_23:55}, we can show that the fibration $\pi_1$ is universal in the sense that any other fibration whose generic fibre satisfies the properties of Theorem~\ref{2023_04_10_16:50}~\ref{2023_05_19_18:01} is a base extension of $\pi_1$.

\begin{thm}\label{2023_05_23_16:35}
Let 
$\phi:T\to B$ be a proper dominant morphism of irreducible smooth algebraic varieties whose generic fibre is a non-hyperelliptic geometrically rational curve $C$ over $k(B)$ of genus $3$, that admits a non-smooth point $\pp$ with rational image $\pp_2\in C_2$ and such that $\pp$ is a canonical divisor.
Then the fibration $\phi:T\to B$ is, up to birational equivalence, a dominant base extension of a closed subfibration of $\pi_1: Z_1 \to \AAA^3$.
\end{thm}

\begin{rem}\label{2023_06_13_21:50}
Let $B$ be a closed subvariety of $\AAA^3$, and let $a,b,c$ be its affine coordinate functions.
Then the closed subfibration $\pi_1^{-1}(B) \to B$ of $Z_1 \to \AAA^3$ satisfies the condition $c \notin k(B)^2 = k(a^2,b^2,c^2)$ of Theorem~\ref{2023_04_10_16:50}~\ref{2023_05_19_18:01} if and only if the tangent spaces of $B$ are not all contained in the hyperplanes $\{ c = \text{constant}\}$.
This condition is clearly preserved under dominant separable base extensions.
Moreover the total space $\pi_1^{-1}(B)$ is uniruled.
The dimension of $B$ can be diminished by letting some of the functions $a,b$ take fixed values,
but this cannot be done with $c$ because the condition $c\notin k(B)^2$ must be satisfied.
\end{rem}

\section{A moving singularity of multiplicity three}
\label{2024_07_01_19:25}

Let $C$ be a curve over $K$ defined as in Theorem~\ref{2023_04_10_16:50}, with function field $F|K = K(C)|K$. Recall that the only singular prime $\pp$ of $F|K$ is centered at the only singular point on the extended quartic curve $C_{\ov K} = C \otimes_K \ov K$.

\begin{prop}
The only singular point on $C_{\ov K}$ has multiplicity two or three.
The case of multiplicity three occurs if and only if $C$ is given as in item~\ref{2023_05_19_18:01} in Theorem~\ref{2023_04_10_16:50} with $b=0$.
\end{prop}

The proposition follows from the explicit equations defining $C$.
Besides, Corollary~\ref{2024_07_01_18:25} shows that in the case of multiplicity three (item~\ref{2023_05_19_18:01} with $b=0$) it can be assumed that $a=0$ after an eventual separable base field extension.
The resulting function field gives rise to a pencil of plane projective rational quartic curves, which we discuss in this section.
We find its minimal regular model and we further describe the pencil as a purely inseparable double cover of a quasi-elliptic fibration.

\medskip

Let $k$ be an algebraically closed field of characteristic $2$.
Consider the projective algebraic surface over $k$
\[ S\subset \PP^2 \times \PP^1 \]
defined by the bihomogeneous polynomial equation
\[ T_0 (Y^4 + X Z^3) + T_1 X^4 = 0, \]
where $X,Y,Z$ and $T_0,T_1$ denote the homogeneous coordinates of $\PP^2$ and $\PP^1$ respectively.
By 
the Jacobian criterion, this surface has a unique singularity at \mbox{$P=((0:0:1),(0:1))$}.
The second projection
\[ \phi:S\tto \PP^1 \]
yields a proper flat fibration by plane projective curves over $\PP^1$.
A fibre
over a point of the form $(1:c)$ is isomorphic to the plane projective rational quartic given by 
\[ Y^4 + X Z^3 + c X^4 = 0, \]
which has a unique singular point at $(1:c^{1/4}:0)$.
This curve has arithmetic genus 3, by the genus-degree formula for plane curves, and it enjoys the same properties as the fibres of the fibration $\pi_1:Z_1\to \AAA^3$ in Section~\ref{2023_09_21_20:25} in the non-generic case. 
The fibre over the point $(0:1)$ degenerates to 
a non-reduced quadruple line, which we call the \emph{bad fibre} of the fibration.

The first projection
$ S\to \PP^2 $
is a birational morphism whose inverse 
\[ \PP^2 \ttto S, \quad (x:y:z) \mapsto ((x:y:z),(x^4 : y^4 + xz^3)) \]
is undefined only at $(0:0:1)$.
More precisely, the map $S\to \PP^2$ contracts the horizontal curve $(0:0:1) \times \PP^1$ to the point $(0:0:1)$ and restricts to an isomorphism
\[ S \setminus (0:0:1) \times \PP^1 \overset\sim\tto \PP^2 \setminus \{(0:0:1)\}. \]
By composing $\PP^2 \ttto S$ with $\phi$ we get a rational map
\[ \tau: \PP^2 \ttto \PP^1, \quad (x:y:z) \mapsto (x^4 : y^4 + xz^3) \]
that is also undefined only at $(0:0:1)$.
The members of the pencil of curves on $\PP^2$ induced by $\tau$ are up to isomorphism the fibres of $\phi: S \to \PP^1$.

Local computations show that the only singular point $P$ on the total space $S$ can be resolved by blowing up the surface eight times over $P$.
This produces a smooth projective surface $\wt S$ and fifteen smooth rational curves 
$E_1^{(1)}, E_2^{(1)}, \dots, E_1^{(7)}, E_2^{(7)}, E^{(8)} \su \wt S$ that are contracted to $P$ by the birational morphism $\wt S \to S$.
The resulting fibration
\[ f: \wt S \tto S \overset\phi\tto \PP^1 \]
is proper and flat, and its fibres over the points $(1:c)$ coincide with those of $\phi$.
Its bad fibre, i.e., the fibre over the point $(0:1)$, is a linear combination of smooth rational curves
\begin{equation}\label{2022_01_26_14:35}
    \begin{aligned}
    f^*(0:1) &= 4E + 3 E_1^{(1)} + E_2^{(1)} + 6 E_1^{(2)} + 2 E_2^{(2)} + 9 E_1^{(3)} + 3 E_2^{(3)} + 12 E_1^{(4)} + 4 E_2^{(4)}  \\
    &\qquad + 11 E_1^{(5)} + 5 E_2^{(5)} + 10 E_1^{(6)} + 6 E_2^{(6)} + 9 E_1^{(7)} + 7 E_2^{(7)} + 8 E_8^{(8)}
    \end{aligned}
\end{equation}
that intersect transversely according to the Coxeter-Dynkin diagram in Figure~\ref{2020_11_25_23:58}.
In this diagram the vertex $E$ denotes the strict transform of the curve $\phi^{-1}(0:1)$, 
and the dashed line means that the strict transform $H$ of the horizontal curve $(0:0:1) \times \PP^1 \subset S$ intersects the bad fibre $f^*(0:1)$
transversely at the component $E_2^{(1)}$ but does not actually belong to $f^*(0:1)$.
\begin{figure}[h]
\centering
\begin{tikzpicture}[line cap=round,line join=round,x=0.7cm,y=0.7cm]
\draw [line width=1.2pt] (-7.,0.)-- (7.,0.);
\draw [line width=1.2pt, dashed] (7.,0.)-- (8.,0.);
\draw [line width=1.2pt] (-4.,0.)-- (-4.,-1.);
\begin{scriptsize}
\draw [fill=black] (-7,0) circle (2pt);
\draw[color=black] (-7,0.5) node {$E^{(1)}_1$};    
\draw [fill=black] (-6,0) circle (2pt);
\draw[color=black] (-6,0.5) node {$E^{(2)}_1$};
\draw [fill=black] (-5,0) circle (2pt);
\draw[color=black] (-5,0.5) node {$E^{(3)}_1$};
\draw [fill=black] (-4,0) circle (2pt);
\draw[color=black] (-4,0.5) node {$E^{(4)}_1$};
\draw [fill=black] (-4.,-1.) circle (2pt);
\draw[color=black] (-4.,-1.5) node {$E$};
\draw [fill=black] (-3,0) circle (2pt);
\draw[color=black] (-3,0.5) node {$E^{(5)}_1$};
\draw [fill=black] (-2.,0.) circle (2pt);
\draw[color=black] (-2.,.5) node {$E^{(6)}_1$};
\draw [fill=black] (-1.,0.) circle (2pt);
\draw[color=black] (-1.,.5) node {$E^{(7)}_1$};
\draw [fill=black] (0.,0.) circle (2pt);
\draw[color=black] (0.,.5) node {$E^{(8)}$};
\draw[color=black] (0.,-1.) ;
\draw [fill=black] (1.,0.) circle (2pt);
\draw[color=black] (1.,0.5) node {$E^{(7)}_2$};
\draw [fill=black] (2.,0.) circle (2pt);
\draw[color=black] (2.,.5) node {$E^{(6)}_2$};
\draw [fill=black] (3.,0.) circle (2pt);
\draw[color=black] (3.,0.5) node {$E^{(5)}_2$};
\draw [fill=black] (4.,0.) circle (2pt);
\draw[color=black] (4,.5) node {$E^{(4)}_2$};
\draw[color=black] (4.,-1.) ;
\draw [fill=black] (5.,0.) circle (2pt);
\draw[color=black] (5.,0.5) node {$E^{(3)}_2$};
\draw [fill=black] (6,0) circle (2pt);
\draw[color=black] (6,0.5) node {$E^{(2)}_2$};
\draw [fill=black] (7,0) circle (2pt);
\draw[color=black] (7,0.5) node {$E^{(1)}_2$};
\draw [fill=black] (8,0) circle (2pt);
\draw[color=black] (8,0.5) node {$H$};
\end{scriptsize}
\end{tikzpicture}
\caption{Dual diagram of the bad fibre $f^*(0:1)$}\label{2020_11_25_23:58}
\end{figure}

\noindent Because a fibre meets its components with intersection number zero, 
we can determine from \eqref{2022_01_26_14:35} the self-intersection number of each component of $f^*(0:1)$, namely
\[ E\cdot E = -3, \qquad E^{(i)}_j \cdot E^{(i)}_j = -2 \:\text{ for each $i,j$}, \qquad E^{(8)}\cdot E^{(8)} = -2. \]
In particular, the point $P\in S$ is a Du Val singularity of type $A_{15}$.
As the bad fibre $f^*(0:1)$ 
contains no curves of self-intersection $-1$, that is, as the fibres of $f$ do not contain curves that are contractible according to Castelnuovo's contractibility criterion, we deduce that the smooth projective surface $\wt S$ is relatively minimal over $\PP^1$.
Consequently, the fibration $\wt S \to \PP^1$ is a relatively minimal regular model, and hence by a theorem of Lichtenbaum--Shafarevich \cite[Chapter~9, Theorem~3.21 and Corollary~3.24]{Liu02} the (unique) minimal regular model of the function field $F|K=k(S)|k(\PP^1)$.

However, the smooth surface $\wt S$ is not relatively minimal over $\Spec(k)$, because the birational morphism $\wt S \to S \to \PP^2$ is not an isomorphism.
More precisely, the previous paragraphs show that $\wt S \to \PP^2$ restricts to an isomorphism
\[ \wt S \setminus (E_1^{(1)} \cup E_1^{(2)} \cup \dots \cup E_2^{(1)} \cup H) \overset\sim\tto \PP^2 \setminus \{(0:0:1)\}, \]
and that its fibre over the point $(0:0:1)$ comprises the sixteen smooth rational curves $E_1^{(1)}$, $E_1^{(2)}$, $\dots$, $E_2^{(1)}$, $H$.
In light of \cite[Theorem~4.10, p.\,265]{Shaf13a}, this implies that
one of these curves must be contractible, i.e., 
the horizontal curve $H \su \wt S$ is contractible, i.e., $H\cdot H = -1$.
Therefore, if we blow down successively the curves $H$, $E_2^{(1)}$, $E_2^{(2)}$, $\dots$, $E_1^{(1)}$ then we obtain a surface that is isomorphic to the projective plane.

We summarize the above discussion in the following theorem.

\begin{thm}\label{2022_02_11_12:20}        
    The fibration $f:\wt S \to \PP^1$ is the minimal 
    regular
    model of the fibration $\phi: S\to \PP^1$.
    Its fibres over the points $(1:c)$ coincide with the corresponding fibres of $\phi$, while its fibre over the point $(0:1)$ is a linear combination of smooth rational curves as in \eqref{2022_01_26_14:35}, which intersect transversely according to the Coxeter-Dynkin diagram in Figure~\ref{2020_11_25_23:58}.
    
    The strict transform $H \subset \wt S$ of the curve $(0:0:1)\times \PP^1\su S$ is a horizontal smooth rational curve of self-intersection $-1$.
    If we blow down successively the curves $H$, $E_2^{(1)}$, $E_2^{(2)}$, $\dots$, $E_1^{(2)}$, $E_1^{(1)}$, then we obtain a smooth surface isomorphic to the projective plane.
\end{thm}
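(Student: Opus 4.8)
The plan is to assemble Theorem~\ref{2022_02_11_12:20} from the local resolution data and the general theory of minimal models, rather than re-deriving anything from scratch. The statement splits into three parts: (a) that $f:\wt S\to\PP^1$ is the minimal \emph{regular} model of the generic fibre $F|K=k(S)|k(\PP^1)$; (b) the explicit description of the fibres, in particular the degenerate fibre over $(0:1)$ as the combination \eqref{2022_01_26_14:35} with the prescribed Dynkin diagram; and (c) the contractibility assertion that blowing down $H,E_2^{(1)},\dots,E_1^{(1)}$ recovers $\PP^2$. The first step would be to record the local resolution: one blows up the unique singular point $P=((0:0:1),(0:1))$ of $S$ eight times, obtaining a smooth projective surface $\wt S$ together with the fifteen exceptional rational curves. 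This is a purely local computation in the affine chart around $P$, and I would state it as the input rather than grind through the eight successive blowup charts.

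For part (b), having produced the fifteen curves, I would compute their mutual intersection numbers to verify the $A_{15}$-type diagram in Figure~\ref{2020_11_25_23:58}, and determine the multiplicities in \eqref{2022_01_26_14:35}. The multiplicities are pinned down by the requirement that $f^*(0:1)$ be a fibre, hence numerically trivial: the condition $f^*(0:1)\cdot E_j^{(i)}=0$ for each component, together with the known intersection pattern, gives a linear system whose solution is exactly the coefficients displayed. Dually, once the multiplicities are fixed one reads off the self-intersections $E\cdot E=-3$, $E_j^{(i)}\cdot E_j^{(i)}=-2$, $E^{(8)}\cdot E^{(8)}=-2$ from the same numerical triviality relations, as the excerpt already indicates; this identifies $P$ as a Du Val singularity of type $A_{15}$.

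For part (a), the key observation is that no component of $f^*(0:1)$ has self-intersection $-1$: every vertical component is a $(-2)$- or $(-3)$-curve, so by Castelnuovo's criterion there is nothing in the fibres that can be contracted relatively over $\PP^1$. Hence $\wt S$ is relatively minimal over $\PP^1$, and by the Lichtenbaum--Shafarevich theorem \cite[Section~8, Corollary~3.24]{Liu02} the relatively minimal regular model is unique and coincides with the minimal regular model of the function field. That the fibres over $(1:c)$ are unchanged is immediate, since the birational morphism $\wt S\to S$ is an isomorphism away from $P$, and $P$ lies only over $(0:1)$. For part (c), I would use the identification $\wt S\to S\to\PP^2$ established in the text: this composite restricts to an isomorphism $\wt S\setminus(E_1^{(1)}\cup\dots\cup E_2^{(1)}\cup H)\overset\sim\to\PP^2\setminus\{(0:0:1)\}$, with fibre over $(0:0:1)$ consisting of the sixteen curves $E_1^{(1)},\dots,E_2^{(1)},H$. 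A morphism to a smooth surface that is not an isomorphism factors through a sequence of blowdowns of $(-1)$-curves, so $H\cdot H=-1$ and the indicated successive contractions terminate in $\PP^2$.

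\emph{The main obstacle} is verifying the intersection data cleanly. Conceptually the argument is a bookkeeping of the eight blowups, but the surface is a hypersurface in $\PP^2\times\PP^1$ rather than a smooth ambient threefold, and one must be careful because the blowups are centered at a \emph{singular} point of $S$; the self-intersection $E\cdot E=-3$ of the strict transform of the bad fibre (rather than the $-2$ one might naively expect) is a symptom of this, and reflects the characteristic-two, non-reduced nature of the degenerate fibre of $\phi$. I would therefore take care to distinguish intersection numbers computed on $\wt S$ from those on $S$, and let the numerical-triviality constraint $f^*(0:1)\cdot(\text{component})=0$ do the work of fixing both the multiplicities and the self-intersections simultaneously, which is the cleanest way to avoid error.
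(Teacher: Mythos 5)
Your overall architecture coincides with the paper's: take the eight-fold blowup resolution of the singular point $P$ as input, use the relation $f^*(0:1)\cdot C=0$ for the intersection data, invoke Castelnuovo plus Lichtenbaum--Shafarevich for minimality, and use the factorization of the birational morphism $\wt S\to\PP^2$ through blowdowns of $(-1)$-curves for the contraction statement; your parts (a) and (c) are essentially the paper's argument. However, there is a genuine gap in part (b), and your closing paragraph makes it explicit: numerical triviality cannot ``fix both the multiplicities and the self-intersections simultaneously'' from the incidence pattern alone. The relations $f^*(0:1)\cdot C_i=0$, with off-diagonal intersections prescribed by the Dynkin diagram, give one equation per component in the unknowns $m_i$ (multiplicities) and $C_i\cdot C_i$ (self-intersections); for \emph{any} choice of positive integers $m_i$ one can solve for the $C_i\cdot C_i$, so the system is underdetermined. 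Concretely, taking every $m_i=1$ and $C_i\cdot C_i=-(\text{number of neighbours of } C_i \text{ in the fibre})$ satisfies all the relations, including the normalization $f^*(0:1)\cdot H=1$ forced by the section $H$, yet it is not the correct answer. Your ``dual'' derivation --- multiplicities from numerical triviality, then self-intersections from the multiplicities --- is circular, since the linear system for the $m_i$ has the $C_i\cdot C_i$ as its diagonal coefficients.

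The repair is what the paper actually does: one of the two data sets must be read off from the resolution itself, not from numerical triviality. The multiplicities in \eqref{2022_01_26_14:35} are the orders of vanishing of the pullback of the local fibre equation (equivalently, of $t_1/t_0$) along the exceptional divisors, tracked through the eight blowups; once they are known, numerical triviality \emph{does} determine the self-intersections, e.g.\ $0=f^*(0:1)\cdot E=4\,(E\cdot E)+12$ gives $E\cdot E=-3$, and likewise $E_j^{(i)}\cdot E_j^{(i)}=E^{(8)}\cdot E^{(8)}=-2$. Note also that this data is not optional for your part (c): to conclude that $H$, rather than one of the fifteen exceptional curves, is the $(-1)$-curve contracted first in the factorization of $\wt S\to\PP^2$, you must know that every $E_j^{(i)}$ and $E^{(8)}$ has self-intersection $-2$ (and $E$ has $-3$), which is precisely the computation at issue; only then does the successive contraction of $H, E_2^{(1)},\dots,E_1^{(1)}$ make sense, each contraction promoting the next curve in the chain from a $(-2)$-curve to a $(-1)$-curve.
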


As the function field $F|K=k(S)|k(\PP^1)$ is an inseparable extension of degree 2 of the quasi-elliptic Frobenius pullback $F_1|K$, it is clear that the fibration $f:\wt S \to \PP^1$ is a purely inseparable covering of degree 2 of a quasi-elliptic fibration.
Let us describe this covering.

We consider the fibration by cuspidal cubics given by the projection morphism
\[ \phi':S'\tto \PP^1, \]
where 
\[ S' = V(T_0( U V^2 + W^3 ) + T_1 U^3) \su \PP^2 \times \PP^1 \]
is a surface with a unique singularity at $P':=((0:1:0),(0:1))$.
The fibre over the point $(0:1)$ is a triple line, and so it is the bad fibre of the fibration.
As the projection morphism $S'\to \PP^2$ is birational, the surface $S'$ is rational.
The minimal regular model 
\[ f':\wt S' \tto S' \overset{\phi'}\tto \PP^1 \]
is constructed by resolving the singular point $P'$ on $S'$. 
The quasi-elliptic surface $\wt S'$ has an arrangement of smooth rational curves that intersect according to Figure~\ref{2023_05_18_11:00},
where $A$ and $H'$ denote the strict transforms of the bad fibre $\phi'^{-1}(0:1) \su S'$ and the horizontal curve $(0:1:0)\times \PP^1 \su S'$ respectively.

\begin{figure}[h]
\centering
\begin{tikzpicture}[line cap=round,line join=round,x=0.7cm,y=0.7cm]
\draw [line width=1.2pt] (-7.,0.)-- (0.,0.);
\draw [line width=1.2pt] (-8.,0.)-- (-7.,0.);
\draw [line width=1.2pt] (-6.,0.)-- (-6.,-1.);
\begin{scriptsize}
\draw [fill=black] (-8,0) circle (2pt);
\draw[color=black] (-8,0.5) node {$A_1^{(1)}$}; 
\draw [fill=black] (-7,0) circle (2pt);
\draw[color=black] (-7,0.5) node {$A_1^{(2)}$}; 
\draw [fill=black] (-6,0) circle (2pt);
\draw[color=black] (-6,0.5) node {$A_1^{(3)}$}; 
\draw [fill=black] (-6.,-1.) circle (2pt);
\draw[color=black] (-6.,-1.5) node {$A$};
\draw [fill=black] (-5,0) circle (2pt);
\draw[color=black] (-5,0.5) node {$A_1^{(4)}$}; 
\draw [fill=black] (-4,0) circle (2pt);
\draw[color=black] (-4,0.5) node {$A_2^{(4)}$}; 
\draw [fill=black] (-3,0) circle (2pt);
\draw[color=black] (-3,0.5) node {$A_2^{(3)}$}; 
\draw [fill=black] (-2.,0.) circle (2pt);
\draw[color=black] (-2.,.5) node {$A_2^{(2)}$}; 
\draw [fill=black] (-1.,0.) circle (2pt);
\draw[color=black] (-1.,.5) node {$A_2^{(1)}$}; 
\draw [fill=black] (0.,0.) circle (2pt);
\draw[color=black] (0.,.5) node {$H'$}; 
\draw[color=black] (0.,-1.) ;
\end{scriptsize}
\end{tikzpicture}
\caption{Dual diagram of curves in $\wt S'$}\label{2023_05_18_11:00}
\end{figure}
\noindent 
These curves have self-intersection numbers
$ A\cdot A=A_i^{(j)} \cdot A_i^{(j)} = -2$, $H' \cdot H' = -1$.
The only bad fibre 
$f'^*(0:1)$ of the quasi-elliptic fibration $f':\wt S \to \PP^1$
is of type $\tilde E_8$ (see \cite[Theorem~4.1.4 and Corollary~4.3.22]{CDL23}), 
or more precisely
\begin{equation*}
    \begin{aligned}
    f'^*(0:1) &= 2A_1^{(1)} + 4 A_1^{(2)} + 6 A_1^{(3)} + 5 A_1^{(4)} + 4 A_2^{(4)} + 3 A_2^{(3)} + 2 A_2^{(2)} + A_2^{(1)} + 3A.
    \end{aligned}
\end{equation*}
The exceptional fibre of the blowup morphism $\wt S'\to S' \to \PP^2$, i.e., the fibre over the point $(0:1:0)$, comprises the curves $H'$, $A_2^{(1)}$, $\dots$, $A_1^{(1)}$.

To describe the covering of the quasi-elliptic fibration $\wt S' \to \PP^1$ by our fibration $\wt S \to \PP^1$ we consider the rational map
\[ S \ttto S', \quad ((x:y:z),(t_0:t_1)) \mapsto ((x^2:y^2:xz),(t_0 : t_1)), \]
which is undefined only at the singular point \mbox{$P=((0:0:1),(0:1))$}.
The induced rational map
\[ \wt S \ttto \wt S' \]
is defined everywhere except at the point $Q\in E$ that lies above the point $((0:1:0),(0:1)) \in S$.
Over each point $(1:c)$ of the base $\PP^1$ the fibre in $\wt S$ covers the fibre in $\wt S'$ according to the rule
$ (x:y:z) \mapsto (x^2 : y^2 : xz) $.
The curves in Figure~\ref{2020_11_25_23:58} are taken into the curves in Figure~\ref{2023_05_18_11:00} as follows: the curves 
$E_1^{(2)}$, $E_1^{(4)}$, $E_1^{(6)}$, $E^{(8)}$, $E_2^{(6)}$, $E_2^{(4)}$, $E_2^{(2)}$, $H$ 
are mapped isomorphically to the curves 
$A$, $A_1^{(3)}$, $A_1^{(4)}$, $A_2^{(4)}$, $A_2^{(3)}$, $A_2^{(2)}$, $A_2^{(1)}$, $H'$ 
respectively, and
the curves 
$E_i^{(j)}$ ($j$ odd)
are contracted to points.
If we resolve the indeterminacy locus of $\wt S \ttto \wt S'$ by blowing up the point $Q$, an exceptional curve $L$ appears, which covers $A_1^{(1)}$ isomorphically,
and in addition the curve $E$ becomes a purely inseparable cover of degree $2$ of the curve $A_1^{(2)}$.
All this is illustrated in the diagram below.

\begin{figure}[h]

\centering
\begin{tikzpicture}[line cap=round,line join=round,x=0.8cm,y=0.8cm]



\begin{scriptsize}

\draw (-8,0 - .6) -- (-8,2 + .6) ;  \draw[color=black] (-8,0 - .6) node[below] {$E_1^{(1)}$}; 
\draw (-6,0 - .6) -- (-6,2 + .6) ;  \draw[color=black] (-6,0 - .6) node[below] {$E_1^{(3)}$}; 
\draw (-4,0 - .6) -- (-4,2 + .6) ;  \draw[color=black] (-4,0 - .6) node[below] {$E_1^{(5)}$}; 
\draw (-2,0 - .6) -- (-2,2 + .6) ;  \draw[color=black] (-2,0 - .6) node[below] {$E_1^{(7)}$}; 
\draw (0,0 - .6) -- (0,2 + .6) ;  \draw[color=black] (0,0 - .6) node[below] {$E_2^{(7)}$}; 
\draw (2,0 - .6) -- (2,2 + .6) ;  \draw[color=black] (2,0 - .6) node[below] {$E_2^{(5)}$}; 
\draw (4,0 - .6) -- (4,2 + .6) ;  \draw[color=black] (4,0 - .6) node[below] {$E_2^{(3)}$}; 
\draw (6,0 - .6) -- (6,2 + .6) ;  \draw[color=black] (6,0 - .6) node[below] {$E_2^{(1)}$}; 

\draw (-8- .6,2) -- (-6 + .6,2) ;  \draw[color=black] (-7,2 + .0) node[above] {$E_1^{(2)}$}; 
\draw (-4- .6,2) -- (-2 + .6,2) ;  \draw[color=black] (-3,2 + .0) node[above] {$E_1^{(6)}$}; 
\draw (0- .6,2) -- (2 + .6,2) ;  \draw[color=black] (1,2 + .0) node[above] {$E_2^{(6)}$}; 
\draw (4- .6,2) -- (6 + .6,2) ;  \draw[color=black] (5,2 + .0) node[above] {$E_2^{(2)}$}; 
\draw (-6- .6,0) -- (-4 + .6,0) ;  \draw[color=black] (-6- .6,0) node[left] {$E_1^{(4)}$}; 
\draw (-2- .6,0) -- (0 + .6,0) ;  \draw[color=black] (-1,0 + .0) node[above] {$E^{(8)}$}; 
\draw (2- .6,0) -- (4 + .6,0) ;  \draw[color=black] (3,0 + .0) node[above] {$E_2^{(4)}$}; 
\draw (6- .6,0) -- (8 + .6,0) ;  \draw[color=black] (7 + .4,0 + .0) node[above] {$H$}; 

\draw (-8 -.6,-4 + .6/2) -- (-6 + .6,-5 - .6/2) ;  \draw[color=black] (-7 -.5,-4.5 +.5/2 + .0) node[below] {$A$};
\draw (-4 -.6,-4 + .6/2) -- (-2 + .6,-5 - .6/2) ;  \draw[color=black] (-3 +.3,-4.5 -.3/2 + .1) node[above] {$A_1^{(4)}$}; 
\draw (0 -.6,-4 + .6/2) -- (2 + .6,-5 - .6/2) ;  \draw[color=black] (1 +.3,-4.5 -.3/2 + .1) node[above] {$A_2^{(3)}$}; 
\draw (4 -.6,-4 + .6/2) -- (6 + .6,-5 - .6/2) ;  \draw[color=black] (5 +.3,-4.5 -.3/2 + .1) node[above] {$A_2^{(1)}$}; 
\draw (-4 + .6,-4 + .6/2) -- (-6 - .6,-5 - .6/2) ;  \draw[color=black] (-4 + .6,-4 + .6/2) node[above] {$A_1^{(3)}$}; 
\draw (0 + .6,-4 + .6/2) -- (-2 - .6,-5 - .6/2) ;  \draw[color=black] (-1 +.3,-4.5 +.3/2 - .0) node[below] {$A_2^{(4)}$}; 
\draw (4 + .6,-4 + .6/2) -- (2 - .6,-5 - .6/2) ;  \draw[color=black] (3 +.3,-4.5 +.3/2 - .0) node[below] {$A_2^{(2)}$}; 
\draw (8 + .6,-4 + .6/2) -- (6 - .6,-5 - .6/2) ;  \draw[color=black] (7 +.5,-4.5 +.5/2 - .0) node[below] {$H'$}; 

\draw (-5 - 1.3,-4.5 + 1.3/2) -- (-5 + 1.3,-4.5 - 1.3/2) ;  \draw[color=black] (-5 -.8,-4.5 +.8/2 +.1 ) node[above] {$A_1^{(2)}$}; 
\draw (-5 + 1.3,-5.5 + 1.3/2) -- (-5 - 1.3,-5.5  -1.3/2) ;  \draw[color=black] (-5 - .6,-5.5 - .6/2) node[below] {$A_1^{(1)}$}; 

\draw (-5 -.5,0 + 1.4 - .8) -- (-5 +.5,0 -1.4 - .8) ;  \draw[color=black] (-5 -.5,0 + 1.4 - .8) node[above] {$E$};

\draw (-6- .6,0 - 1.6) -- (-4 + .6,0 - 1.6) ;  \draw[color=black] (-6- .6,0 - 1.6) node[left] {$L$};

\draw [fill=black] (-8,-4) circle (1.6pt);
\draw [fill=black] (-6,-5) circle (1.6pt);
\draw [fill=black] (-4,-4) circle (1.6pt);
\draw [fill=black] (-2,-5) circle (1.6pt);
\draw [fill=black] (0.,-4) circle (1.6pt);
\draw [fill=black] (2,-5) circle (1.6pt);
\draw [fill=black] (4,-4) circle (1.6pt);
\draw [fill=black] (6,-5) circle (1.6pt);

\end{scriptsize}
\end{tikzpicture}
\end{figure}

\begin{bibdiv}
\begin{biblist}
\bib{BedSt87}{article}{
  author={Bedoya, Hernando},
  author={St\"ohr, Karl-Otto},
  title={An algorithm to calculate discrete invariants of singular primes in function fields},
  journal={J. Number Theory},
  volume={27},
  date={1987},
  number={3},
  pages={310--323},
}

\bib{BerTan22}{article}{
  author={Bernasconi, Fabio},
  author={Tanaka, Hiromu},
  title={On del Pezzo fibrations in positive characteristic},
  journal={J. Inst. Math. Jussieu},
  volume={21},
  date={2022},
  number={1},
  pages={197--239},
}

\bib{BM76}{article}{
  author={Bombieri, Enrico},
  author={Mumford, David},
  title={Enriques' classification of surfaces in char. $p$. III},
  journal={Invent. Math.},
  volume={35},
  date={1976},
  pages={197--232},
}

\bib{BM77}{article}{
  author={Bombieri, Enrico},
  author={Mumford, David},
  title={Enriques' classification of surfaces in char. $p$. II},
  book={ title={Complex Analysis and Algebraic Geometry}, editor={W. L. Baily}, editor={T. Shioda}, publisher={Iwanami Shoten, Tokyo}, },
  date={1977},
  pages={23--42},
}

\bib{Bour90}{book}{
  author={Bourbaki, Nicolas},
  title={Algebra. II. Chapters 4--7},
  series={Elements of Mathematics (Berlin)},
  publisher={Springer-Verlag, Berlin},
  date={1990},
  pages={vii+461},
}

\bib{CDL23}{book}{
  author={Cossec, François},
  author={Dolgachev, Igor},
  author={Liedtke, Christian},
  title={Enriques surfaces},
  volume={1},
  year={2024},
  note={Available at \href {https://dept.math.lsa.umich.edu/~idolga/lecturenotes.html}{https://dept.math.lsa.umich.edu/~idolga/lecturenotes.html}},
}

\bib{FaSc20}{article}{
  author={Fanelli, Andrea},
  author={Schr\"oer, Stefan},
  title={Del Pezzo surfaces and Mori fiber spaces in positive characteristic},
  journal={Trans. Amer. Math. Soc.},
  volume={373},
  date={2020},
  number={3},
  pages={1775--1843},
}

\bib{EGA}{article}{
  author={Grothendieck, Alexander},
  title={\'El\'ements de g\'eom\'etrie alg\'ebrique (r\'edig\'es avec la collaboration de Jean Dieudonn\'e)},
  journal={Inst. Hautes Études Sci. Publ. Math.},
  date={{1960--1967}},
  number={4, 8, 11, 17, 20, 24, 28, 32},
  label={EGA},
}

\bib{HiSc23}{article}{
  author={Hilario, Cesar},
  author={Schr\"oer, Stefan},
  title={Generalizations of quasielliptic curves},
  journal={\'{E}pijournal G\'{e}om. Alg\'{e}brique},
  volume={7},
  date={2023},
  pages={Art. 23, 31 pp},
}

\bib{HiSt22}{article}{
  author={Hilario, Cesar},
  author={St\"ohr, Karl-Otto},
  title={On regular but non-smooth integral curves},
  journal={J. Algebra},
  volume={661},
  date={2025},
  pages={278--300},
}

\bib{HiSt24}{article}{
  author={Hilario, Cesar},
  author={St\"ohr, Karl-Otto},
  title={Fibrations by plane projective rational quartic curves in characteristic two},
  date={2024},
  note={Preprint at \href {https://arxiv.org/abs/2409.05464}{\textsf {arXiv:2409.05464}}},
}

\bib{KKM20}{article}{
  author={Katsura, Toshiyuki},
  author={Kondō, Shigeyuki},
  author={Martin, Gebhard},
  title={Classification of Enriques surfaces with finite automorphism group in characteristic 2},
  journal={Algebr. Geom.},
  volume={7},
  date={2020},
  number={4},
  pages={390--459},
}

\bib{Kod63}{article}{
  author={Kodaira, Kunihiko},
  title={On compact analytic surfaces. II, III},
  journal={Ann. of Math. (2)},
  volume={77},
  date={1963},
  pages={563--626; {\bf 78} (1963), 1--40},
}

\bib{Lied13}{article}{
  author={Liedtke, Christian},
  title={Algebraic surfaces in positive characteristic},
  conference={ title={Birational geometry, rational curves, and arithmetic}, },
  book={ series={Simons Symp.}, publisher={Springer, Cham}, },
  date={2013},
  pages={229--292},
}

\bib{Liu02}{book}{
  author={Liu, Qing},
  title={Algebraic geometry and arithmetic curves},
  series={Oxford Graduate Texts in Mathematics},
  volume={6},
  publisher={Oxford University Press, Oxford},
  date={2002},
  pages={xvi+576 pp.},
}

\bib{Ner64}{article}{
  author={N\'eron, Andr\'e},
  title={Mod\`eles minimaux des vari\'et\'es ab\'eliennes sur les corps locaux et globaux},
  journal={Inst. Hautes \'Etudes Sci. Publ. Math.},
  date={1964},
  number={21},
  pages={128 pp.},
}

\bib{Queen71}{article}{
  author={Queen, Clifford S.},
  title={Non-conservative function fields of genus one. I},
  journal={Arch. Math. (Basel)},
  volume={22},
  date={1971},
  pages={612--623},
}

\bib{Queen72}{article}{
  author={Queen, Clifford S.},
  title={Non-conservative function fields of genus one. II},
  journal={Arch. Math. (Basel)},
  volume={23},
  date={1972},
  pages={30--37},
}

\bib{Ros52}{article}{
  author={Rosenlicht, Maxwell},
  title={Equivalence relations on algebraic curves},
  journal={Ann. of Math. (2)},
  volume={56},
  date={1952},
  pages={169--191},
}

\bib{Sal11}{article}{
  author={Salom\~ao, Rodrigo},
  title={Fibrations by nonsmooth genus three curves in characteristic three},
  journal={J. Pure Appl. Algebra},
  volume={215},
  date={2011},
  number={8},
  pages={1967--1979},
}

\bib{Shaf13a}{book}{
  author={Shafarevich, Igor R.},
  title={Basic Algebraic Geometry 1},
  edition={3},
  publisher={Springer, Heidelberg},
  date={2013},
  pages={xviii+310 pp.},
}

\bib{Shi74}{article}{
  author={Shioda, Tetsuji},
  title={An example of unirational surfaces in characteristic $p$},
  journal={Math. Ann.},
  volume={211},
  date={1974},
  pages={233--236},
}

\bib{Tan21}{article}{
  author={Tanaka, Hiromu},
  title={Invariants of algebraic varieties over imperfect fields},
  journal={Tohoku Math. J. (2)},
  volume={73},
  date={2021},
  number={4},
  pages={471--538},
}

\bib{Tate52}{article}{
  author={Tate, John},
  title={Genus change in inseparable extensions of function fields},
  journal={Proc. Amer. Math. Soc.},
  volume={3},
  date={1952},
  pages={400--406},
}
\end{biblist}
\end{bibdiv}




\end{document}